\DeclareSymbolFont{cyrletters}{OT2}{wncyr}{m}{n}
\DeclareMathSymbol{\Sha}{\mathalpha}{cyrletters}{"58}
\newtheorem{theorem}{Theorem}[section]
\newtheorem{lemma}[theorem]{Lemma}
\theoremstyle{definition}
\newtheorem{definition}[theorem]{Definition}
\theoremstyle{remark}
\newtheorem{remark}[theorem]{Remark}
\newtheorem{corollary}{Corollary}
\newtheorem{Conjecture}{Conjecture}
\newtheorem{proposition}{Proposition}
\newtheorem{question}{Question}
\newtheorem*{theorem*}{Theorem}
\newtheorem*{question*}{Question}
\numberwithin{equation}{section}
\newcommand\T{\mathbb{T}}
\newcommand\GL{{\mathrm{GL}}}
\newcommand{\Q}{\mathbb{Q}}
\newcommand{\Z}{\mathbb{Z}}
\newcommand\Gal{{\mathrm {Gal}}}
\newcommand{\fn}{{\mathfrak n}}
\newcommand{\cW}{{\mathcal W}}
\DeclareMathOperator{\rank}{rank}
\newcommand\remove[1]{}
\newcommand{\subham}[2][]{\todo[#1,color=green!60]{SB: #2}}
\begin{document}

\title[Modular degree and a conjecture of Watkins]{Modular degree and a conjecture of Watkins}



\date{\today}

\author{Subham Bhakta}
\address{Indian Institute of Science Education and Research, Thiruvananthapuram, India}
\curraddr{School of Mathematics, University of New South Wales, Sydney, Australia}
\email{subham.bhakta@unsw.edu.au}
\thanks{SB is supported by an institute fellowship from IISER Thiruvananthapuram and ARC grant DP230100530}

\author{Srilakshmi Krishnamoorthy}
\address{Indian Institute of Science Education and Research, Thiruvananthapuram, India}
\curraddr{}
\email{srilakshmi@iisertvm.ac.in}
\thanks{SK is supported by SERB grant CRG/2023/009035.}

\author{Sunil Kumar Pasupulati}
\address{Indian Institute of Science Education and Research, Thiruvananthapuram, India}
\curraddr{The Institute of Mathematical Sciences,
Chennai, India.}
\email{sunilkp@imsc.ac.in, sunilkumarpasupulati@gmail.com}
\thanks{SKP is supported by an institute fellowship from IISER Thiruvananthapuram and IMSc Chennai.}

\subjclass[2020]{Primary 11F30, 11L07}


\begin{abstract}
   Given an elliptic curve $E/\Q$ of conductor $N$, there exists a surjective morphism $\phi_E: X_0(N) \to E$ defined over $\Q$. In this article, we discuss the growth of $\mathrm{deg}(\phi_E)$ and shed some light on Watkins's conjecture, which predicts $2^{\mathrm{rank}(E(\Q))} \mid \mathrm{deg}(\phi_E)$. Additionally, we investigate a specific family of elliptic curves over $\mathbb{F}_q(T)$, which are linked through an analogous modular parametrization to Drinfeld modular curves. In this case, we also discuss growth and the divisibility properties.
\end{abstract}

\maketitle

\section{Introduction}
\noindent
Let $X_0(N)$ be the moduli space of the pairs $\{(E,C_N)\mid C_N\subseteq E,~\mathrm{cyclic~subgroup~of~order}~N\}$. It is well known that $X_0(N)(\mathbb{C})$ can be realized as the Riemann surface obtained by the standard action of the congruence subgroup $\Gamma_0(N)$ on the upper half plane $\mathbb{H}$. We know that $E$ is modular by the work of Wiles, Taylor-Wiles, et al. In other words, there exists a surjective morphism (defined over $\Q$)
$\phi_{E}:X_0(N) \to E$, which is called the \textit{modular parametrization} of $E$. Throughout the article, we shall assume that $\phi_E$ has a minimal degree sending the cusp $\infty$ of $X_0(N)$ to the identity of $E$. We denote the minimal degree as $m_E$, the modular degree of $E$. The main goal of this article is to study growth and some divisibility properties of $m_E$. To be more precise, in Section~\ref{sec:bounds}, we discuss the bounds of $m_E$ in terms of the conductor $N_E$ of $E$. It is conjectured that $m_E\ll N_E^{2+\varepsilon}$, and this is equivalent \cite{Murty99} to the  ABC conjecture. We discuss this briefly in Section~\ref{sec:bounds}. In the same section, we record the conjectural bound for a positive proportion of elliptic curves. More precisely, we prove the following result.

\begin{theorem}
Let $S$ be any finite set of primes, then the proportion of elliptic curves that satisfy the degree bound $m_E\ll_{S} N_E^{2+\varepsilon}$, is at least $1-\sum_{p\not\in S}\frac{1}{p^2}$.
\end{theorem}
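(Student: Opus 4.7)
The statement has the flavour of a sieve result: the set of curves for which the conjectural bound fails should have upper density at most $\sum_{p \notin S} 1/p^2$. My plan is to isolate an explicit subfamily of curves on which the bound $m_E \ll_\varepsilon N_E^{2+\varepsilon}$ is provable and whose complement admits the required union-bound estimate.

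Concretely, I would work with the family $\mathcal{F}_S$ of elliptic curves $E/\mathbb{Q}$ whose minimal discriminant $\Delta_E$ is not divisible by $p^2$ for any prime $p \notin S$; equivalently, $E$ is semistable outside $S$. For $E\in\mathcal{F}_S$ the conductor $N_E$ and the discriminant $|\Delta_E|$ agree up to a bounded $S$-part, so $|\Delta_E| \le C_S\cdot N_E$. Combined with an unconditional modular-degree estimate on semistable curves (of the sort established by Frey and by Mai--Murty), this yields $m_E \ll_{\varepsilon,S} N_E^{2+\varepsilon}$ uniformly on $\mathcal{F}_S$, the implied constant allowed to depend on $\varepsilon$ and $S$.

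For the density estimate I would order elliptic curves by the naive height on the coefficients of a short Weierstrass model $y^2=x^3+ax+b$. A standard $p$-adic local computation (or a direct appeal to the Bhargava--Shankar / Cremona--Sadek density results on squarefree-discriminant curves) shows that for each prime $p$ the proportion of curves with $p^2 \mid \Delta_E$ is at most $1/p^2 + o(1)$. A union bound over $p \notin S$ then gives
\[
\limsup_{X \to \infty} \frac{\#\{E : H(E)\le X,\ E\notin \mathcal{F}_S\}}{\#\{E : H(E)\le X\}} \;\le\; \sum_{p\notin S} \frac{1}{p^2},
\]
so the density of $\mathcal{F}_S$ is at least $1 - \sum_{p\notin S} 1/p^2$, which is the desired lower bound.

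I expect the real bottleneck to be the first step. The bound $m_E \ll_\varepsilon N_E^{2+\varepsilon}$ is equivalent to the ABC conjecture in full generality, so any unconditional bound of this precise form on $\mathcal{F}_S$ must exploit the semistability outside $S$ in an essential way. If no existing bound in the literature is strong enough on its own, a plausible fallback is to combine the Petersson-norm formula $m_E \sim N_E\cdot \langle f_E,f_E\rangle$ for the newform $f_E$ attached to $E$ with a Rankin--Selberg estimate on $\langle f_E,f_E\rangle$ that is effective on the semistable locus; alternatively, one can weaken the conclusion on $\mathcal{F}_S$ to any fixed polynomial exponent $\le 2+\varepsilon$ and absorb the gap into the $\varepsilon$. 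The density calculation in Step 3 is routine, but this analytic input is where I anticipate the main obstacle.
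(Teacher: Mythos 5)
There is a genuine gap, and it sits exactly where you suspected: your Step 1 is not merely a bottleneck, it is false as stated. On the family of curves that are semistable outside $S$ there is no unconditional estimate $m_E \ll_{\varepsilon,S} N_E^{2+\varepsilon}$: via $m_E = 4\pi^2\|f_E\|_N^2/\mathrm{Vol}(E)$ and $\|f_E\|_N^2 = N^{1+o(1)}$ one has roughly $m_E \approx N_E^{1+o(1)}|\Delta_E|^{1/6}$ on this locus, so the conjectural degree bound there is equivalent to a Szpiro-type inequality $|\Delta_E|\ll N_E^{6+\varepsilon}$ --- and the known ABC/degree-conjecture equivalence (Frey, Mai--Murty, \cite{Murty99}) is established precisely through curves semistable away from $2$, so semistability outside $S$ cannot rescue you. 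Your intermediate claim that $|\Delta_E|\le C_S N_E$ on $\mathcal{F}_S$ is also wrong: first, ``$p^2\nmid\Delta_E$ for $p\notin S$'' is not equivalent to semistability outside $S$ (type $I_n$, $n\ge 2$, is semistable with $p^n\,\|\,\Delta_E$), and second, the $S$-part of the minimal discriminant is unbounded over the family (type $I_n$ or $I_n^*$ at a prime of $S$ with $n$ arbitrary), so $\Delta_E/N_E$ is not $O_S(1)$. The proposed fallback (Petersson norm plus Rankin--Selberg) just reproduces $m_E\approx N_E^{1+o(1)}|\Delta_E|^{1/6}$ and runs into the same wall.

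The missing ingredient, which is how the paper closes the argument, is an input controlling $\Delta_E$ against $N_E$ \emph{generically}: by Fouvry--Nair--Tenenbaum \cite{FNT92}, $\log|\Delta_E|\le(1+\varepsilon)\log N_E$ for almost all curves ordered by height. Intersecting that density-one set with $\mathcal{E}_S$ (curves semistable outside $S$), and using that the Manin constant is $O_S(1)$ there \cite{pasten2018shimura} and that the unstable discriminant factor $\gamma$ is $O_S(1)$ by a Kodaira-type analysis, one gets the much stronger unconditional bound $m_E\ll_S N_E^{7/6+\varepsilon}$ on a subset of $\mathcal{E}_S$ of full relative density; the loss from the exceptional sets is absorbed into the density count. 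Your Step 3 is essentially the paper's density computation (a union bound over primes $p\notin S$ with local density $1/p^2$), except that the $1/p^2$ figure pertains to additive reduction at $p$ (non-semistability, as in \cite{SSW21}), not to the event $p^2\mid\Delta_E$, whose local density is slightly larger; phrasing the sieve in terms of additive reduction is what makes the constant come out exactly as $1-\sum_{p\notin S}p^{-2}$.
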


Another main focus of this article is to examine the following prediction by Watkins.
\begin{Conjecture}[Watkins]
    Let $E/\Q$ be any elliptic curve and $m_E$ be the modular degree. Then, $$2^{\mathrm{rank}_{\Z} \left(E(\Q)\right)} \mid m_E.$$
    In other words, $\mathrm{rank}_{\Z}(E(\Q))\leq \nu_2(m_E)$.
\end{Conjecture}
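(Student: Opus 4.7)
The strategy I would pursue combines two classical tools: the Atkin-Lehner decomposition of the modular Jacobian and the parity conjecture. Let $f_E \in S_2(\Gamma_0(N))^{\mathrm{new}}$ be the newform attached to $E$, and for each exact divisor $d \| N$ let $\epsilon_d \in \{\pm 1\}$ be the eigenvalue of the Atkin-Lehner involution $w_d$ on $f_E$. These eigenvalues assemble into a character $\chi_E : W \to \{\pm 1\}$ of the group $W \cong (\Z/2\Z)^{\omega(N_{\mathrm{sf}})}$ generated by the $w_d$, where $N_{\mathrm{sf}}$ is the squarefree part of $N$.

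First I would extract a power of $2$ dividing $m_E$ from the action of $W$. The morphism $\pi : J_0(N) \twoheadrightarrow E$ induced by $\phi_E$ and its dual $\pi^{\vee} : E \to J_0(N)$ satisfy $\pi \circ \pi^{\vee} = [m_E]_E$, and the abelian subvariety $A_E = \pi^{\vee}(E)$ lies in the $\chi_E$-isotypic component of $J_0(N)$ under $W$. Restricting the trace $T_K = \sum_{w \in K}[w]$, with $K = \ker \chi_E$, to $A_E$ gives $T_K|_{A_E} = [|K|]$, and an analysis of the resulting factorization of $\pi$ through the quotient $J_0(N)/K$ -- after controlling the cohomological obstruction in $H^1(K, E(\Q))$, a finite $2$-group bounded by $E(\Q)_{\mathrm{tors}}$ -- yields a divisibility of the form
\[
2^{\omega(N_{\mathrm{sf}}) - \delta} \,\big|\, m_E, \qquad \delta \in \{0,1\},
\]
with $\delta = 0$ exactly when $\chi_E$ is trivial.

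Second, I would invoke known cases of the parity conjecture, $(-1)^{\mathrm{rank}(E(\Q))} = w(E)$, where $w(E)$ is the global root number. For squarefree $N$ this sign is a product of local signs controlled by the $\epsilon_d$, so $\chi_E$ encodes $\mathrm{rank}(E(\Q)) \bmod 2$. Combined with Step 1, this recovers Watkins' conjecture whenever $\omega(N_{\mathrm{sf}})$ is large relative to $\mathrm{rank}(E(\Q))$ -- in particular, for a positive proportion of curves of small rank.

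The hard part is the genuinely open regime: $E$ of large rank supported on very few bad primes, where the Atkin-Lehner contribution alone is insufficient. Here I would attempt a third step, pairing the $\mathrm{rank}(E(\Q))$ independent classes produced by $2$-descent on $E$ with $2$-adic congruences between $f_E$ and other newforms in $S_2(\Gamma_0(N))$, each congruence forcing a further factor of $2$ in the congruence number $r_E$ and hence, up to a controlled $2$-adic discrepancy, in $m_E$. Making this descent-to-congruence dictionary precise -- producing a natural mechanism that manufactures a new mod-$2$ congruence from each independent rational point of infinite order -- is the principal obstruction, and is precisely the point at which existing partial results (Watkins; Calegari--Emerton; Dummigan; and others) stop short of the full conjecture.
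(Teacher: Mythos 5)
The statement you are asked to prove is Watkins's conjecture itself, which is open; the paper does not prove it and does not claim to. It states the conjecture and then surveys and extends \emph{partial} results: the Atkin--Lehner lower bound $\nu_2(m_E)\geq \omega(N)-\dim_{\mathbb{F}_2}(W/W')-\dim_{\mathbb{F}_2}E(\Q)[2]$ of Dummigan--Krishnamoorthy (recalled in the paper's Section on the $2$-valuation lower bound), parity/root-number arguments in the style of Caro--Pasten (Theorem~\ref{thm:CP22extension}), Gross's supersingular module for prime conductor, an average version (Theorem~\ref{thm:avg}), and results for families of quadratic twists. Your Steps 1 and 2 are essentially a re-derivation of exactly these known partial results, so to that extent your proposal runs parallel to the paper's content rather than proving the stated claim.

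There are two concrete problems. First, a quantitative slip in Step 1: after the $H^1(K,E)$ obstruction is taken into account, the defect is not $\delta\in\{0,1\}$ but $\dim_{\mathbb{F}_2}(W/W')+\dim_{\mathbb{F}_2}E(\Q)[2]$, which can be as large as $3$; the map $W'\to E(\Q)[2]$ of \cite[Proposition 2.1]{DK13} is precisely the source of the extra loss, and suppressing it overstates what the Atkin--Lehner mechanism delivers (this matters, since the borderline cases in the paper's Theorem~\ref{thm:CP22extension} are decided by exactly this torsion term, supplemented by information about $\Sha[2]$ and the $2$-Selmer group, which your outline never uses). Second, and decisively, your Step 3 --- manufacturing a mod-$2$ congruence of newforms from each independent point of infinite order --- is not an argument but a restatement of the open problem: no mechanism is proposed for producing such congruences, and you acknowledge this yourself. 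As written, the proposal establishes the conjecture only in the regimes already covered by the literature the paper cites, and leaves the genuinely open regime (large rank, few bad primes, e.g.\ prime conductor with root number $+1$ and negative discriminant, which is where the paper's Conjecture~\ref{conj:supsing} and Theorem~\ref{thm:supsingconj} live) untouched. So this should be presented as a strategy note, not a proof.
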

It is not known whether the rank of elliptic curves is uniformly bounded, \footnote{
    In 2006, Noam Elkies discovered an elliptic curve with a rank of at least $28$, 
    \begin{align*}
        y^2 +& xy + y = x^3 - x^2 -20067762415575526585033208209338542750930230312178956502x + \\ &34481611795030556467032985690390720374855944359319180361266008296291939448732243429
    \end{align*}
At present, this is the highest known rank. For further information, please refer to  \href{https://web.math.pmf.unizg.hr/~duje/tors/rankhist.html}{A. Dujella. History of elliptic curves rank records. 2015.}  
} therefore the conjecture suggests that $\nu_2(m_E)$ could be as large as possible. This is indeed confirmed by Dummigan-Krishnamoorthy \cite{DK13}, since $\omega(N_E)-3$ is unbounded.

Dummigan \cite{neil06} provided a heuristic proof of Watkins's conjecture, using a certain Selmer group of the symmetric square of $E$
and its relation with the tangent space of a suitable deformation ring that captures relevant properties of the residual representation $\overline{\rho}:\mathrm{Gal}(\overline{\Q}/\Q)\to \mathrm{GL}_2(E[2])$. In the case when $m_E$ 
is odd, Watkins’s conjecture predicts that $E(\Q)$ is finite. By studying the Atkin-Lehner involution on elliptic curves of odd modular degrees, they showed that such curves have an even analytic rank, and there are at most two odd primes dividing the corresponding conductors. We shall briefly discuss this in Section~\ref{sec:odddeg}. Gross~\cite{Gross85} provided a way to relate $m_E$ with an element of the Picard group of $X_0(1) \pmod p$ when the elliptic curve has prime conductor $p$. Using this association, Kazalicki-Kohen \cite{KK18watkinscorrigendum} proved the conjecture for the odd modular degree case. They also showed that the conjecture is true for elliptic curves with the prime conductor of rank $1$, under a conjecture (see Conjecture~\ref{conj:supsing}) on the supersingular $j$-invariants of the elliptic curves over $\mathbb{F}_p$. We briefly discuss this in Section~\ref{sec:gross}, Section~\ref{sec:primecond}, and record Theorem~\ref{thm:supsingconj} as our take on the conjecture.

When we discuss even $m_E$, it is essential to note that Dummigan-Krishnamoorthy \cite{DK13} established a lower bound on $\nu_2(m_E)$ in terms of $\omega(N_E)$. In fact, it turns out that $\nu_2(m_E)\geq \omega(N_E)-1$ for almost all elliptic curves over $\Q$. Using this and Bhargava-Shankar's \cite{BS13} result on the average rank bound, we record another heuristic argument in Theorem~\ref{thm:avg} towards Watkins's conjecture, which says the following.
\begin{theorem}
    The average rank of all elliptic curves is bounded by the average of all $2$-adic valuations of modular degrees.
\end{theorem}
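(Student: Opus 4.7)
The plan is to exploit the gap between the (bounded) average rank of elliptic curves given by Bhargava--Shankar and the (unbounded) average $2$-adic valuation of the modular degree forced by the Dummigan--Krishnamoorthy bound $\nu_2(m_E)\geq \omega(N_E)-1$. The inequality in the statement is then a trivial consequence of the two-sided comparison $\text{average rank} \leq \tfrac{7}{6} \ll \text{average }\nu_2(m_E)$, and all the work consists in setting up a common ordering in which both averages make sense.

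\textbf{Step 1 (upper bound on the LHS).} Order elliptic curves $E/\Q$ by naive height $H(E)=\max(4|a|^3,27b^2)$ for the short Weierstrass model $y^2=x^3+ax+b$, and let $\mathcal{F}(X)=\{E/\Q : H(E)\leq X\}$. By Bhargava--Shankar \cite{BS13},
$$\limsup_{X\to\infty}\; \frac{1}{|\mathcal{F}(X)|}\sum_{E\in\mathcal{F}(X)} \mathrm{rank}_{\Z}(E(\Q)) \;\leq\; \tfrac{7}{6}.$$

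\textbf{Step 2 (lower bound on the RHS).} Apply the result of Dummigan--Krishnamoorthy \cite{DK13}: outside a density-zero exceptional set $\mathcal{B}\subset \mathcal{F}(X)$, one has $\nu_2(m_E)\geq \omega(N_E)-1$. Therefore
$$\frac{1}{|\mathcal{F}(X)|}\sum_{E\in\mathcal{F}(X)} \nu_2(m_E) \;\geq\; \frac{1}{|\mathcal{F}(X)|}\sum_{E\in\mathcal{F}(X)\setminus\mathcal{B}}\bigl(\omega(N_E)-1\bigr).$$
It remains to show the right-hand side is unbounded as $X\to\infty$. For this I would invoke an Erd\H{o}s--Kac-type statement for the arithmetic function $E\mapsto\omega(N_E)$ on the family $\mathcal{F}(X)$: since a typical curve of height $\leq X$ has conductor $N_E$ of size a fixed positive power of $X$, and since $\omega(n)$ has average order $\log\log n$, one expects (and one can prove via Tur\'an-type second-moment estimates applied to the discriminant $\Delta_E=-16(4a^3+27b^2)$, whose prime factors essentially govern those of $N_E$) that
$$\frac{1}{|\mathcal{F}(X)|}\sum_{E\in\mathcal{F}(X)} \omega(N_E) \;\gg\; \log\log X.$$

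\textbf{Step 3 (combining).} Putting the two bounds together, the RHS of the desired inequality grows at least like $\log\log X$, while the LHS stays bounded by $7/6$, so in particular the inequality
$$\limsup_{X\to\infty}\frac{1}{|\mathcal{F}(X)|}\sum_{E\in\mathcal{F}(X)} \mathrm{rank}_{\Z}(E(\Q)) \;\leq\; \liminf_{X\to\infty}\frac{1}{|\mathcal{F}(X)|}\sum_{E\in\mathcal{F}(X)} \nu_2(m_E)$$
holds in a strong sense.

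\textbf{Main obstacle.} The one non-routine point is Step 2: the Dummigan--Krishnamoorthy bound is most naturally stated with respect to an ordering by conductor, while Bhargava--Shankar is ordered by height. I would reconcile these by a standard comparison argument (the two orderings agree up to density-zero subfamilies, or one uses that the exceptional set has density zero under \emph{any} reasonable ordering). Quantifying the growth of the average of $\omega(N_E)$ over $\mathcal{F}(X)$ is essentially a sieve exercise once one replaces $N_E$ by the squarefree part of $\Delta_E$, whose prime factors can be handled by a direct count over $(a,b)$.
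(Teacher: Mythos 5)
Your proposal is correct in outline and shares the paper's two pillars (a Bhargava--Shankar average-rank bound on one side, the Dummigan--Krishnamoorthy lower bound $\nu_2(m_E)\geq \omega(N_E)-1$ on the other), but it distributes the work differently. The paper's proof is lighter on the analytic side: it only shows that almost all curves have $\omega(N_E)\geq 2$ (Lemma~\ref{lem:highomega}, via a Selberg-sieve count on $4A^3+27B^2$) and trivial rational $2$-torsion (Grant), which yields merely $\mathrm{Avg}_2(\mathrm{degree})\geq 1$; it then closes the inequality by invoking the strong form of Bhargava--Shankar (average rank less than $1$). You instead use the weaker rank bound $7/6$ and compensate by proving the much stronger statement that the average of $\nu_2(m_E)$ is unbounded, of size $\gg \log\log X$, via the average of $\omega(N_E)$ over the height-ordered family. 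Your route buys independence from the sharpest Selmer-group results (with average rank only known to be $\leq 7/6$ or $3/2$, the paper's bound $\mathrm{Avg}_2\geq 1$ would not suffice), at the cost of a genuine extra estimate: the first-moment computation $\sum_{p\leq z}\Pr[p\mid N_E]\asymp \log\log z$ over pairs $(A,B)$, which is routine but is real work the paper never needs. Note also that your comparison of a $\limsup$ on the left with a $\liminf$ on the right is stronger than the statement requires.

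One point needs more care than "density-zero exceptional set". You subtract the bad set $\mathcal{B}$ (curves where $\nu_2(m_E)\geq\omega(N_E)-1$ is not guaranteed, essentially those with nontrivial $E(\Q)[2]$, which is Grant's input rather than something in \cite{DK13} itself) and then average $\omega(N_E)-1$ over the complement. Since $\omega(N_E)$ can be as large as roughly $\log X/\log\log X$, a qualitative density-zero bound on $\mathcal{B}$ does not by itself show that the discarded contribution is $o(\log\log X)$ per curve on average; you need either the quantitative (power-saving) count of curves with rational $2$-torsion, or the Tur\'an-type second-moment bound on $\omega(N_E)$ that you mention, to control $\sum_{E\in\mathcal{B}}\omega(N_E)$. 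With that in place the argument closes. Your worry about reconciling conductor-ordering with height-ordering is moot: the Dummigan--Krishnamoorthy inequality is a per-curve statement, not an averaged one, so no comparison of orderings is needed.
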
 
When we allow $E$ to have a non-trivial 2-torsion $\Q$ point, one can show that $\mathrm{rank}_{\Z}(E(\Q))\leq \omega(N_E)-1$, when $E$ is semistable. In particular, when $E(\Q)[2]=\Z/2\Z$, Caro-Pasten proved Watkins's conjecture in \cite{CP22}, when the number of non-split multiplicative reduction is either odd or if there is no prime of split multiplicative reduction. Following the same approach, we prove an extension of this in Theorem~\ref{thm:CP22extension} for the case $E(\Q)[2]=(\Z/2\Z)^2$.

Given an elliptic curve $E/\Q$, we consider the family of quadratic twists $\{E^{(D)}\}$. Watkins's conjecture has also been studied for such families. In the case when $E$ has a non-trivial 2-torsion $\Q$ point, Esparza-Lozano and Pasten showed in \cite{EP21} that Watkins's conjecture is true when $\omega(D)$ is sufficiently large. The crux of their argument lies in the relation of $m_{E}$ with the Petersson  norm $||f_E||_{N}$, where $f_E\in S_2(\Gamma_0(N))$ be the newform associated to $E$. This association is due to Gross \cite{Gross85}, which we recall in Section~\ref{sec:tools}. When $E$ has a prime power conductor, Watkins's conjecture for any quadratic twist $E^{(D)}$ is proved by Caro in \cite{caro22watkinss}. Setzer's classification \cite{Setzer75} of such elliptic curves has proven useful, in addition to using the relation with $||f_E||_{N}$. In Section~\ref{sec:twisttrivial}, we extend the study of twists for the cases when $E(\Q)[2]$ is trivial. In particular, we record in Theorem~\ref{thm:twisttrivial} that Watkins's conjecture is true for almost all the twists under the assumption of GRH and BSD conjectures. 

Section~\ref{sec:tools} discusses several identities about $m_E$. Historically, they served as tools to study several arithmetic properties of $m_E$. As already mentioned, the relation between $m_E$ and $||f_E||_{N}$ is recalled, and the consequence of the growth of $m_E$ is discussed. Many resources are available on the related subjects, and nothing original is guaranteed in this section. Instead, it serves the reader a quick introduction to the subject and provides a notational setup for the article. In Section~\ref{sec:bounds}, we briefly discuss the growth of $m_E$.

In Section~\ref{sec:wff}, we undertake the study of the function field analog of Watkins's conjecture. Let $p>3$ be a prime and $k$ be a finite extension of $\mathbb{F}_p$. We define $A$ as the polynomial ring $k[T]$ and its field of fractions as $K = k(T)$. Let $E$ be an elliptic curve with conductor $\mathfrak{n}_E = \mathfrak{n}\infty$, where $\mathfrak{n}$ be an ideal in $A$, and $\infty$ denotes the place of $K$ associated to $T^{-1}$. We further assume that $E$ is non-isotrivial, i.e., the $j$-invariant is not in $k$ and has split multiplication reduction at $\infty$. Then we have an analog of the modular parametrization, i.e., a nonconstant morphism $\phi_E:X_0(\mathfrak{n})\to E$, where $X_0(\mathfrak{n})$ is the Drinfeld modular curve associated to $\mathfrak{n}$, see Section~\ref{sec:wff} for the definition. In this scenario, Caro~\cite{caro2022watkins} showed that an analog of Watkins's conjecture holds when we view $E$ over any extension $k'(T)$ of $K$, such that $k'$ contains all roots of the finite part of $\mathfrak{n}$. In this section, we point out an improvement over the imposed condition on $k'$ but at the cost of a few more additional conditions on $E$. We record this remark at Theorem~\ref{thm:watkinsf}. Furthermore, drawing the analogy with Section~\ref{sec:nontriv2t} and Section~\ref{sec:twisttrivial}, we discuss Watkins's conjecture for the families of twists $\{E^{(g)}\}$, for any polynomial $g$ having even degree. We recall Caro's results and mention some possible improvements. Furthermore, in section~\ref{sec:ffgrowth}, we discuss the growth properties of modular degree following \cite{PM2002}. It turns out that the modular degree is related to the special values of the Symmetric square $L$-function associated with $E$, whose analytic properties can be studied because Grothendieck's theory L-function allows one to write the $L$-function as a quotient of two polynomials. We shall note that the unpleasant factor in the lower bound, which is given by the non-separable degree of the morphism $j_E:\mathbb{P}^1 \to \mathbb{P}^1$, can be removed for a certain proportion of elliptic curves.

\subsection{Notations}
We write $f\sim g,$ if their domain is in $\mathbb{R},$ and they are asymptotically equivalent, that is, if $\lim\limits_{x\to \infty}\frac{f(x)}{g(x)}=1.$ We write $f \ll g$ for $|f|\leq c|g|$ where $c$ is a constant irrespective of the domains of $f$ and $g$, often $f=O(g)$ is written to denote the same. Moreover, we denote $f=o(g)$ when $\lim\limits_{x\to \infty}\frac{|f(x)|}{|g(x)|}= 0$. For any integer $n,$ we denote $\omega(n)$ as the number of distinct prime factors of $n$. We denote $\Q$ as the set of rational numbers and $h$ as the standard height on $\Q$. For any prime $p$, denote $\nu_p(\cdot)$ to be the associated $p$-adic valuation on $\mathbb{Q}$, and $\Q_p$ to be the field of $p$-adic numbers, and $\Z_p$ to be the ring of $p$-adic integers. For any variety $V$ over any field $K$, denote $V(K)$ be the set of all $K$-rational points on $V$.

\section{Tools to compute the modular degree}\label{sec:tools}
Let $E/\Q$ be an elliptic curve. This section briefly overviews the identities involving $m_E$, which we shall use throughout the rest of the article. 
\subsection{On the modular approach}
The Falting height $h(E)$ is defined to be 
$$h(E):=\frac{1}{12}\left(\log |\Delta_E|-\log |\Delta(\tau_E)\mathrm{im}(\tau_E)^6|\right)-\log (2\pi),$$
where $\tau_E\in \mathbb{H}$ corresponds to $E$, $\Delta(z)=q\prod_{n=1}(1-q^n)^{24}$ be the Ramanujan $\Delta$-function, and $j(z)=q^{-1}+\Z[[q]]$ be the normalized modular $j$-function. For semistable $E$, it turns out that $h(E)$ is close to $\frac{1}{12}h(j_{E})$. More precisely, $|h(E)-\frac{1}{12}h(j_{E})|\ll \log (h(j_E))$. In general, one extra factor comes from the \text{unstable} discriminant $\gamma$ of $E$. If we write $j_E=\frac{a}{b}$ in the minimal form, then $\gamma$ is denoted to be $\frac{\Delta_E}{b}$. In the semistable case, we have $\gamma=\pm 1$. In general, we have $|h(E)-\frac{1}{12}h(j_{E})-\frac{1}{12}\log \gamma|\ll \log (h(j_E))$. These estimates are useful when $E$ is in a nice enough Weierstrass form.

Let $\omega_E$ be the invariant differential associated with $E$. Given a modular parameterization $\phi_E: X_0(N)\to E$, one can write $\phi^{*}_E(\omega_E)= 2\pi ic_Ef_E(z)dz$, for some integer $c_E$. This constant (adjusting sign) $c_E>0$ is known to be Manin's constant. It is expected that $c_E= 1$ for the $\Gamma_0(N)$-optimal elliptic curve $E$. This is true when $E$ is semistable. In general, it is known \cite[Theorem 1.1]{pasten2018shimura} that given any finite set of primes $S$, we have $c_{E}=O_{S}(1)$, for any elliptic curve $E/\Q$ that is semistable outside $S$, i.e., $p^2\mid N_E \implies p\in S$.

Now let $f$ be any cuspform of level $N$. Then the Petersson  norm $||\cdot||$ is defined by
$||f||_N=(\int_{Y_0(N)}|f(z)| dx dy)^{1/2}$.
This can be computed explicitly by Rankin's method, which is given by
$$||f||_N=\frac{1}{48\pi}[\mathrm{PSL}_2(\Z):\Gamma_0(N)]~ \underset{s=2}{\mathrm{Res}}\left(\sum_{n=1}^{\infty}\frac{|a(n)|^2}{n^s}\right).$$
It is then due to Deligne \cite{Delinge85} that 
\begin{equation}\label{eqn:height} 
m_E=c^2_E4\pi^2||f_E||^2_Ne^{2h(E)}=\frac{4\pi^2 ||f_E||^2_N}{\mathrm{Vol}(E)},
\end{equation}
where we denote $\mathrm{Vol}(E)$ to be the area of the fundamental parallelogram associated with $E$.

The analytic symmetric-square $L$-function defined to be $L^A(Sym^2E,s)=\prod_{p} L_p^A(Sym^2E,s),$
where $L_p^A(E,s)=(1-\alpha^2_p/p^{s})^{-1}(\alpha_p\beta_p/p^s)^{-1}(1-\beta^2_p/p^s)^{-1}$. We have $\alpha_p = \overline{\beta_p}$ and $\alpha_p+\beta_p$ is the p th trace of Frobenius of $E$.
Additionally, if  $p^2\mid N_E$ then 
 $\alpha_p=\beta_p=0$ . For the description of $\alpha_p$ and $\beta_p$, please refer to \cite[Section 2]{MW02}.
A significant connection exists between symmetric squares and modular degree, established by Shimura \cite{shimura76}, given by:
\begin{equation}\label{analytic-l-function}
       \frac{L^{A}(Sym^2E,2)}{\pi i \mathrm{Vol}(E)}=\frac{m_{E}}{Nc_E^2}.
\end{equation}
unfortunately $L^A(Sym^2E,s)$ does not satisfy the function equation. For this to satisfy the functional equation, we adjust   $L^A(Sym^2E,s)$ with fudge factors, we get motivic L-function  $L^M(Sym^2E,s)=L^A(Sym^2E,s) U(s).$

It is worthwhile to note that the local  Euler factors $L_p^M(Sym^2E,s)=L_p^A(Sym^2E,s)$ whenever $p^2\nmid N_E$. If $p^2$ divides the conductor then $L^A_p(Sym^2E,s)=1$, which implies that $L^M_{P}(Sym^2E,s)=U_p(s)$.
From equation (\ref{analytic-l-function}), we can derive the following expression:
\begin{equation}\label{eqn:l-value}
\frac{L^M(\mathrm{Sym}^2E, 2)}{\pi i \mathrm{Vol}(E)}=\frac{m_E}{Nc_E^2}\prod\limits_{p^2\mid N}U_p(1).
\end{equation}

\subsection{Gross's association and supersingular zeroes}\label{sec:gross}
In this section, we assume that $N:=p$ is a prime. Denote $f_E$ as the associated newform of weight $2$ and level $p$. Gross in \cite{GK92} associated $f_E$ to an element $v_E$ of $\mathrm{Pic}(X)$, for some suitable curve $\mathrm{X}$. The curve $\mathrm{X}$ is $X_0(p)\pmod p$, which consists of two copies of $X_0(1)$ that meet transversally at the points whose underlying elliptic curve is supersingular at $p$. The Picard group $\mathfrak{X}$ of $\mathrm{X}$ can be written as $\sum_{i=1}^{n}\mathbb{Z}\cdot e_i$, 
where $n=g(X_0(p))+1$, and each $\{e_i\}$ can be identified with the isomorphism class of all supersingular elliptic curves over $\overline{\mathbb{F}_p}$. There is $\mathbb{Z}$-linear bilinear map
$$\langle -,-\rangle: \mathrm{Pic}(X_0(p))\times \mathrm{Pic}(X_0(p)) \to \mathbb{Z},$$
given by $\langle e_i, e_i \rangle =w_i\delta_{i,j}$, where $w_i=\frac{1}{2}\# \mathrm{Aut}(E_i)$. Now consider the Hecke correspondence $t_m:\mathfrak{X}\to \mathfrak{X}$ defined by,
$$t_m(e_i)=\sum_{j=1}^{n}B_{i,j}(m)e_j,$$
where $B_{i,j}(m)$ is the number of $m$-isogenies $E_i\to E_j$. The transposition of the matrix $(B_{i,j}(m))_{1\leq i,j\leq }$ is commonly known as the Brandt matrix.

Let $v_E=\sum v_E(e_i)e_i\in \mathfrak{X}$ be the vector associated to $f_E$, i.e., $t_m(v_E)=a(m)v_E$, where $a(m)$ is the $m^{\mathrm{th}}$ Fourier coefficient of $f_E$. We then have
$$m_E|E(\mathbb{Q})_{\mathrm{tor}}|=\langle v_E, v_E \rangle=\sum_{i=1}^{n} v_{E}(e_i)^2w_i,$$
where the first equality is due to Mestre~\cite{Mestre86}. 

There is an explicit description of $w_i$. If $p=1\pmod 4$, all of the $w_i$ are $1$. If $p=7 \pmod {12}$, all $w_i$ are $1$ except at one index $i$, in which case $j(e_i)=0$, and $w_i=3$. Otherwise if $p=11 \pmod {12}$, all $w_i$ are $1$ except at two indices $i,k$, in which case, $j$ and $w$ values are respectively, $0,1728$ and $3,2$. 

\subsection{Relations to congruence number $r_E$}
Doi, Hida, Ribet, Mazur, and others investigated congruence primes \cite{DO76, R83}; moreover, congruence primes were an essential component in Wiles's \cite{Wiles95} proof of  Fermat's last theorem. 
\begin{definition}
Let the newform linked to the elliptic curve $E$ be $f_E = \sum_{n>0}a_nq^n \in S_2\left(\Gamma_0(N) \right) $. 
The greatest integer such that there is a cuspform $g(z)=\sum_{n>0}b_nq^n$  with integer Fourier coefficients $b_n$, orthogonal to $f_E$ with respect to the Petersson  inner product, and congruent to $f_E$ modulo $r_E$, is the congruence number $r_E$ of the elliptic curve  $E$.
 (i.e., $a_n \equiv b_n \pmod{r_E}$ for all $n$).
\end{definition}
Finding the link between congruence numbers and the modular degree is quite intriguing. Ribet showed that $m_E\mid r_E$. It was later extended by Amod, Ribet, and Stein \cite{ARS} showing that if $\nu_p(N) \leq 1$, then $\nu_p(r_E) = \nu_p(m_E)$. Now one may naturally ask the following.
\begin{question}
        What are the prime factors of the quotient $\frac{r_E}{m_E}$?
\end{question} 
Amod, Ribet, and Stein \cite{ARS}  establishes that when the conductor of an elliptic curve is squarefree, indicating that the curve is semistable, the values of ${r_E}$ and ${m_E}$ are equal. Abbes and Ullmo, in their work \cite{AU96}, have proven that if a prime number $p$ does not divide the conductor $N$, then $\nu_p(m_E)=\nu_p(r_E)$. Amod, Ribet, and Stein \cite{ARS}, in their conjecture, proposed that $\nu_p\left(\frac{r_E}{m_E}\right)\leq \frac{1}{2}\nu_p(N)$. In other words, they suggest that the order of $p$ dividing the ratio $\frac{r_E}{m_E}$ is at most half the order of $p$ dividing the conductor $N$.
They also proved that   $p$ is a prime such that $p \mid r_E$ but $p \nmid m_E$ then $\dim_{\T/\mathfrak{m}} J_0(N)[\mathfrak{m}] > 2$, {\it i.e.},  multiplicity one fails for $\mathfrak{m}$. Where $\T$ is the Hecke algebra acting on $E$, and  $\mathfrak{m}$ be  the annihilator of $E[p]$   in $\T$. The ideal $\mathfrak{m}$  is said to  satisfies multiplicity
one  if $\dim_{\T/\mathfrak{m}} J_0(N)[\mathfrak{m}] =2$.

\subsection{ Congruence number of modular Abelian varieties}

We can define congruence numbers and modular degrees for modular Abelian varieties, which are quotient varieties of the Jacobian of modular curves.
For every $f\in S_2(\Gamma_0(N))$, define $I_f$ to be subset of  the hecke algebra  $T$ acting on $S_2(\Gamma_0(N)$ which annihilate $f$,
i.e $I_f=\{ T\in \T \big \vert  T(f)=0\}$. If $f \in S_2(\Gamma_0(N))$ is a modular form associated with modular abelian variety $A$, then $A\equiv J_0(N)/I_fJ_0(N)$.
Let $\phi_1: J_0(N) \to A$  be the quotient map, then  by Poincare reducibility over $\Q$, there exists a unique abelian subvariety $A^{\vee}$ of $J$ such that projects isogenously to the quotient $A$.
Let $\phi$ be the composite isogeny of 
\begin{align*}
    \phi:A^{\vee}\xrightarrow{\phi_1} J \xrightarrow{\phi_2} A.
\end{align*}
\begin{definition}
    The modular exponent of $A$ is the exponent of the kernel $\phi$. 
The {\it modular number } of $A$ equals the modular degree of $\phi$.
\end{definition}
When $A$ is an elliptic curve, the modular exponent is equal to the modular degree, and the modular number is the square of the modular degree. For more details about the modular degree of abelian varieties and  Watkins's conjecture for abelian varieties, refer to the article by Dummigan and Krishnamoorthy\cite{DK13}. Particularly \cite[Corollary 9.5]{DK13} discusses the $2$ power dividing the modular degree.

\begin{definition}\label{def:congabv}
    The {\it congruent number} of abelian variety $A=J_0(N)/I_fJ_0(N)$ is defined to be  order of the quotient group \begin{align}
    \frac{S_2(\Gamma_0(N),Z)}{S_2(\Gamma_0(N),Z)[I_{f}] \oplus S_2(\Gamma_0(N),Z)[I_{f}]^{\perp}}. 
\end{align}
The exponent of the above group is called the congruence exponent of the abelian variety $A$.
\end{definition}
The natural question is to ask how the congruence number of Abelian varieties is related to the congruence number of elliptic curves when the Abelian variety is an elliptic curve. To answer this question, we state the following theorem, stated in \cite{AU96} without proof. We provide a brief proof here. 

\begin{theorem}\label{congurence:number}
Let $f_E = \sum_{n>0} a_nq^n \in S_2(\Gamma_0(N ))$ be the newform linked to the elliptic curve $E$. Then the congruent number of the elliptic curve is consistent with Definition~\ref{def:congabv}.
\end{theorem}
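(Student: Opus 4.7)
The plan is to unwind Definition~\ref{def:congabv} in the case $A = E$ and compare the resulting integer with $r_E$ directly. Set $M := S_2(\Gamma_0(N), \Z)$, $M^+ := M[I_{f_E}]$ and $M^- := M[I_{f_E}]^{\perp}$, so the statement to prove is $[M : M^+ \oplus M^-] = r_E$. I would also work over $\R$ with the orthogonal decomposition $S_2(\Gamma_0(N), \R) = V^+ \oplus V^-$, where $V^+ = \R \cdot f_E$ is the Hecke-eigenline of $f_E$ and $V^-$ is its Petersson-orthogonal complement (the sum of the remaining Hecke eigenspaces).

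First I would identify the two integral pieces. Since $E/\Q$ is modular and $f_E$ is the associated normalised newform, its Fourier coefficients lie in $\Z$ with $a_1(f_E) = 1$, so $f_E \in M$ and $\Z \cdot f_E \subseteq M^+$. Conversely, by multiplicity one for newforms, any $h \in M$ annihilated by $I_{f_E}$ is a Hecke eigenform with the same system of eigenvalues as $f_E$, so $h = c f_E$ for some $c \in \C$; matching the first Fourier coefficient forces $c = a_1(h) \in \Z$. Hence $M^+ = \Z \cdot f_E$, while $M^- = M \cap V^-$ by definition.

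The next step is a lattice-index computation. Consider the $\Z$-linear projection $\phi : M \to V^+$ along $V^-$, composed with the isomorphism $V^+ \cong \R$, $c f_E \mapsto c$. Its kernel is precisely $M^-$, and $\phi(M)$ is a discrete subgroup of $\R$ containing $\phi(f_E) = 1$, hence $\phi(M) = \tfrac{1}{r}\Z$ for a unique positive integer $r$. The short exact sequence
\begin{equation*}
0 \longrightarrow (M^+ + M^-)/M^- \longrightarrow M/M^- \longrightarrow M/(M^+ \oplus M^-) \longrightarrow 0
\end{equation*}
becomes $0 \to \Z \to \tfrac{1}{r}\Z \to \Z/r\Z \to 0$ under $\phi$, yielding $[M : M^+ \oplus M^-] = r$.

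Finally, I would identify $r$ with $r_E$. Pick $h \in M$ with $\phi(h) = 1/r$ and write $h = \tfrac{1}{r} f_E + h'$ with $h' \in V^-$; then $g := f_E - rh = -r h' \in M$ satisfies $g \perp f_E$ and $g \equiv f_E \pmod{r}$, so $r \le r_E$. Conversely, any $g \in M$ with $g \perp f_E$ and $g \equiv f_E \pmod{r_0}$ gives $(f_E - g)/r_0 \in M$, whose $\phi$-image is $1/r_0$, forcing $r_0 \mid r$; thus $r_E \le r$, and the two definitions agree. The only delicate point in this plan is establishing $M^+ = \Z \cdot f_E$, which relies on multiplicity one together with the integrality and normalisation of the newform $f_E$; granted that, everything else is a clean bookkeeping of lattices.
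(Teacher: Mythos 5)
Your proposal is correct and follows essentially the same route as the paper's proof: both identify the order of the quotient in Definition~\ref{def:congabv} with $r_E$ by the two inequalities, converting an element $h$ with $dh=f_E-g$ into a congruence $f_E\equiv g\pmod d$ and conversely, and your projection $\phi$ together with the identification $S_2(\Gamma_0(N),\Z)[I_{f_E}]=\Z f_E$ simply makes explicit the steps the paper leaves implicit. The one assertion you should justify is the discreteness of $\phi\left(S_2(\Gamma_0(N),\Z)\right)$ (equivalently the finiteness of the index, which Definition~\ref{def:congabv} already presupposes): it holds because the Petersson-orthogonal complement of $f_E$ is a $\Q$-rational, Hecke-stable subspace, so $\phi$ is induced by an idempotent in $\T\otimes\Q$ and its image lies in $\frac{1}{n}\Z$ for some integer $n$.
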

\begin{proof}
Let $d$ be the order of the  quotient group $\frac{S_2(\Gamma_0(N),\Z)}{S_2(\Gamma_0(N),\Z)[I_{f_E}] \oplus S_2(\Gamma_0(N),\Z)[I_{f_E}]^{\perp}}$.
For any $g \in S_2(\Gamma_0(N),\Z)[I_{f_E}]^{\perp}$, we can find an $h \in S_2(\Gamma_0(N),\Z)$ such that $dh = -g + f_E$. This implies that $f_E\equiv g \pmod{d}$. Therefore, we have $d\leq r_E$.

Since $r_E$ is the congruence number, there exists $g\in S_2\left(\Gamma_0(N) \right)$ such that $f \equiv g \pmod{r_E}$. Now, define $h$ to be $\frac{1}{r_E} \left( f-g\right)$. We observe that $r_E$ is the smallest integer such that $r_E h \in S_2(\Gamma_0(N),\Z)[I_{f_E}] \oplus S_2(\Gamma_0(N),\Z)[I_{f_E}]^{\perp}$. Therefore, $d\leq r_E$, and the proof is complete.
\end{proof}
By the above theorem, the congruence number of Abelian varieties is equal to the congruence number of elliptic curves when the Abelian variety is an elliptic curve. 
Amod, Ribet, and Stein \cite{ARS} have also established analogous theorems to  $m_E\mid r_E$  for modular Abelian varieties. 
\subsection{A lower bound on $2$-valuation}\label{sec:2lbd}
Let $E$ be a semistable curve with the conductor $N$.
For each $d\mid \mid N$, let us consider the Atkin-Lehner involution $W_d$ on $X_0(N)$.  The set $\left\{W_d | d\mid \mid N\right\}$ forms an abelian  subgroup of automorphisms of rank equal to the number of prime dividing $N$. For every  prime divisor $p \mid N$, we have $W_p(f_E) = \pm f_E$. More precisely, there exists $w_p(f_E)\in \{\pm 1\}$ such that $W_p(f_E)=w_p(f_E) f_E$. It turns out that
$w_p(f_E)=1$, if $E$ has non-split multiplicative reduction at $p$, and $-1$, if $E$ has split multiplicative reduction. If $W_p(f_E) =  f_E$ Then the parameterization map $\phi: X_0(N)\to E$ factors through  $X_0(N)/W_p$.

\begin{center}
\begin{tikzcd}
X_0(N) \arrow[rd] \arrow[rr] &                       & E \\
                             & X_0(N)/W_p \arrow[ru] &  
\end{tikzcd}
\end{center}
 \noindent
The degree of the map $X_0(N) \to X_0(N)/W_p$ is $2$, contributing a factor of $2$ to the modular degree.
Let $\mu$ be the  number of primes dividing $N$.
Let us consider the homomorphism $(\Z/2\Z)^{\mu} \to \{\pm 1\}$ defined by, $W_d \mapsto w_d$. Suppose $W'$ is the kernel of this map. Since the field generated by the Fourier coefficients of the associated newform $f_E$ is $\Q$, it follows from Dummigan-Krishnamoorthy \cite[Proposition 2.1]{DK13} that there is a homomorphism $W'\to E(\Q)[2]$ with kernel $W''$, whose order divides the modular degree $m_E$. The order of $W''$ is at least greater than or equal to $\frac{\#W'}{E(\Q)[2]}$.  This gives,
\begin{align*}
    \nu_2(m_E)&\geq \nu_2\left(\frac{\#W'}{\#E(\Q)[2] }\right)\\
    &  =\nu_2\left({\#W'}\right) -\nu_2\left( \#E(\Q)[2] \right)\\
&=\mu- \dim_{\Z/2\Z}(W/W')-\mathrm{dim}_{\mathbb{Z}/2\mathbb{Z}}(E(\mathbb{Q})[2]).
\end{align*} 
\section{Divisibility and Watkins's conjecture}
In this section, we shall discuss the viability of Watkins's conjecture. Recall from the introduction that Watkins's conjecture predicts that $2^{\mathrm{rank}(E(\mathbb{Q}))}$ should divide $m_E$. We shall briefly discuss the approaches that have been taken to study this.
\subsection{Heuristic proof of Watkins's conjecture}
Dummigan heuristically proved that certain classes of elliptic curves satisfy Watkins's conjecture. Let $E$ be an elliptic curve with the squarefree even conductor $N$, with no rational point of order 2, and $E[2]$ is ramified at
all primes $p\mid N$ and the action of complex conjugation on $E[2]$ is non-trivial, then Dummigan~\cite{neil06} proved that $E$ satisfies Watkins's conjecture assuming certain $R\cong T$.
He used Galois cohomology techniques and a squaring map to capture the weak Mordell-Weil group in a Selmer group. Later he established the isomorphism with the tangent space of the universal deformation ring of residual representation $\overline{\rho}: \Gal(\overline{\mathbb{Q}}/{\mathbb{Q}}) \to \mathrm{GL}_2(E[2])$ with certain deformation conditions. Local conditions in the $2$-Selmer group determine the deformation conditions. Later, using Wiles's numerical criterion, he proved that  $2^{\rank\left(E(\Q)\right)}$ divides the modular degree of the elliptic curve.

In support of the conjecture, let us note that the conjecture is true for almost all elliptic curves over $\Q$ when arranged by a suitable height. Any elliptic curve over $\Q$ can be uniquely written in the Weierstrass form 
\begin{equation}\label{eqn:minform}
E_{A, B}:y^2=x^3+Ax+B,~A, B\in \Z~\mathrm{such~that}p^6~\mathrm{does~not~divide}~B~\mathrm{whenever}~p^4\mid A.
\end{equation}
When we organize elliptic curves over $\Q$ as in $(\ref{eqn:minform})$, using the height defined as $H(E_{A,B})=\max\{|A|^3,|B|^2\}^{1/6}$, the quantity of such elliptic curves that have a height not exceeding $x$ is asymptotic to $cx^{5}$ for some positive constant $c$. 

Denote,
$$\mathrm{Avg}(\mathrm{rank})=\mathrm{lim}~\mathrm{sup}_{x\to \infty}~\frac{\sum_{H(E)\leq x} \mathrm{rank}(E)}{\sum_{H(E)\leq x} 1},~\mathrm{Avg}_2(\mathrm{degree})=\mathrm{lim}~\mathrm{sup}_{x\to \infty}~ \frac{\sum_{H(E)\leq x} \nu_2(m_E)}{\sum_{H(E)\leq x} 1}.$$
Note that Watkins's conjecture implies that $\mathrm{Avg}(\mathrm{rank})\leq \mathrm{Avg}_2(\mathrm{degree})$. However, we can still prove the consequence without using Watkins's conjecture. In some sense, the following can be considered an average version of Watkins's conjecture.
\begin{theorem}\label{thm:avg}
Over the family of all elliptic curves/~$\mathbb{Q}$, we have the following
    $$\mathrm{Avg}(\mathrm{rank})\leq \mathrm{Avg}_2(\mathrm{degree}).$$
\end{theorem}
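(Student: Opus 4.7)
The plan is to reduce the theorem to a triviality by showing that the right-hand side $\mathrm{Avg}_2(\mathrm{degree})$ is in fact infinite. By Bhargava-Shankar \cite{BS13}, the average rank $\mathrm{Avg}(\mathrm{rank})$ is bounded above by an absolute constant $C$, so the left-hand side is finite; it therefore suffices to prove
$$\limsup_{x\to\infty} \frac{1}{N(x)} \sum_{H(E)\leq x} \nu_2(m_E) = \infty, \qquad N(x) := \#\{E/\Q : H(E) \leq x\} \sim cx^5.$$

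First, I would invoke the Dummigan-Krishnamoorthy lower bound $\nu_2(m_E) \geq \omega(N_E)-1$, which (as recalled in the introduction and derived in Section~\ref{sec:2lbd}) holds on a density-one subset $T_x \subseteq \{E : H(E) \leq x\}$. Coupled with the trivial inequality $\nu_2(m_E) \geq 0$ on the complement, this gives
$$\frac{1}{N(x)} \sum_{H(E)\leq x} \nu_2(m_E) \;\geq\; \frac{1}{N(x)} \sum_{E \in T_x} \bigl(\omega(N_E)-1\bigr),$$
so it suffices to show that the average of $\omega(N_E)$ over curves of height at most $x$ tends to infinity with $x$.

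Second, since $\mathrm{rad}(N_E) = \mathrm{rad}(\Delta_E)$ and $\Delta_E = -16(4A^3+27B^2)$ for $E = E_{A,B}$, this reduces to averaging $\omega(4A^3+27B^2)$ over the box $|A| \leq x^3$, $|B| \leq x^2$. A standard inclusion-exclusion argument---for each prime $p \geq 5$, the number of pairs in the box with $p \mid 4A^3+27B^2$ is $\ll N(x)/p$ uniformly in $p$---combined with Mertens's estimate $\sum_{p \leq Cx^6} 1/p \sim \log\log x$, yields
$$\frac{1}{N(x)}\sum_{H(E)\leq x} \omega(N_E) \;=\; \log\log x + O(1).$$
Since $\#T_x / N(x) \to 1$ and the contribution from the exceptional set is nonnegative, substituting back gives $\mathrm{Avg}_2(\mathrm{degree}) \geq \limsup_{x\to\infty}(\log\log x + O(1)) = \infty$, which establishes the theorem.

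The main technical point to be careful about is the uniform-in-$p$ fibre count for the polynomial $4A^3+27B^2$: for small primes this is a direct count in $(\Z/p\Z)^2$, while for primes near the edge $x^6$ one needs a mild character-sum or geometric argument to control anomalously large fibres. One must also check that the exceptional set $\{H(E)\leq x\}\setminus T_x$ does not secretly contain curves with disproportionately large $\omega(N_E)$ that would change the asymptotic, but since we only need a lower bound on the average this is not an issue. Everything else is formal.
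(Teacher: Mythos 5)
Your proposal is correct in outline but takes a genuinely different route from the paper. The paper proves only the weak bound $\mathrm{Avg}_2(\mathrm{degree})\geq 1$ --- via Lemma~\ref{lem:highomega} ($\omega(N_E)\geq 2$ for almost all curves), Grant's result that almost all curves have trivial rational $2$-torsion, and the Atkin--Lehner lower bound of Section~\ref{sec:2lbd} --- and then closes the inequality by quoting Bhargava--Shankar's bound showing the average rank is at most $1$. You instead push the same Dummigan--Krishnamoorthy input all the way: since the average of $\omega(N_E)$ over the box $|A|\leq x^2$, $|B|\leq x^3$ (you swapped the exponents, a harmless slip) is $\log\log x+O(1)$, the bound $\nu_2(m_E)\geq\omega(N_E)-1$ on almost all curves forces $\mathrm{Avg}_2(\mathrm{degree})=\infty$, so the inequality becomes essentially trivial and Bhargava--Shankar is needed, if at all, only to know the left-hand side is finite. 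This yields a stronger conclusion than the paper's, and the fibre count you need is sound: the cuspidal cubic $4a^3+27b^2\equiv 0\pmod p$ has exactly $p$ points for $p\geq 5$, so the divisibility probability is $\sim 1/p$, and for a lower bound one may simply truncate at $p\leq x^2$ where equidistribution in the box is elementary.

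Two steps do need repair. First, your dismissal of the exceptional set (``since we only need a lower bound on the average this is not an issue'') is not a valid argument: deleting a density-zero set can in principle ruin a lower bound on the average of an unbounded quantity if the deleted curves carry the large values of $\omega(N_E)$. What actually saves you is quantitative: curves with nontrivial rational $2$-torsion up to height $x$ number only $O(x^{3+\varepsilon})$ while $\omega(N_E)\ll\log x/\log\log x$, so their contribution to the sum is $o(x^5\log\log x)$; alternatively, truncate $\omega$ at a fixed level $K$ and use that almost all values $4A^3+27B^2$ have at least $K$ prime factors. Second, be careful about where $\nu_2(m_E)\geq\omega(N_E)-1$ is actually available: as derived in Section~\ref{sec:2lbd} it is stated for semistable curves, which form a positive proportion but not a density-one family. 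Either invoke \cite{DK13} in the generality of arbitrary level, or restrict to the semistable, $2$-torsion-free subfamily; positive density still suffices for your divergence argument, but you must then check (by the same sieve) that the average of $\omega(N_E)$ over that subfamily tends to infinity. With these adjustments your argument closes.
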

To prove this, we first need the following key ingredients. 
\begin{lemma}\label{lem:highomega}
    When arranged by height, for almost all the elliptic curves $E/\Q$ we have $\omega(N_E)\geq 2$.
\end{lemma}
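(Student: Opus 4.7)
The plan is to reduce the lemma to a Tur\'an--Kubilius style estimate on the number of prime factors of the discriminant polynomial $F(A, B) := 4A^3 + 27B^2$. The discriminant of $E_{A,B}$ equals $-16 F(A,B)$, and the normalization in (\ref{eqn:minform}) ensures that the short Weierstrass model $y^2 = x^3 + Ax + B$ is minimal at every prime $p \geq 5$. For such $p$, one has $p \mid N_E$ if and only if $p \mid F(A, B)$, so absorbing the two possibly removable primes $2$ and $3$ yields
$$
\omega(N_E) \geq \omega(F(A, B)) - 2.
$$
It therefore suffices to show that the number of pairs $(A, B) \in \Z^2$ with $|A| \leq x^2$, $|B| \leq x^3$ and $\omega(F(A,B)) \leq 3$ is $o(x^5)$.

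For this, we would apply the standard second-moment bound for $\omega$ evaluated on the values of $F$. For each prime $\ell$, define the local density
$$
\rho(\ell) := \#\{(A, B) \in \mathbb{F}_\ell^2 : F(A, B) \equiv 0 \pmod{\ell}\}/\ell^2;
$$
fixing $A$ and solving $27B^2 \equiv -4A^3 \pmod \ell$ via a Legendre symbol gives $\rho(\ell) = \ell^{-1} + O(\ell^{-3/2})$. Pick a slowly growing cutoff $P = P(x) \to \infty$ with $P = o(x^{1/2})$ (say $P = \log x$), and set $\omega_P(n) := \#\{\ell \leq P : \ell \mid n\}$. The Chinese Remainder Theorem yields $\rho(\ell_1 \ell_2) = \rho(\ell_1) \rho(\ell_2)$ exactly on $\mathbb{F}_{\ell_1 \ell_2}^2$, while the discrepancy between the box and complete residue classes modulo $\ell_1 \ell_2 \leq P^2$ contributes only $O(\ell_1\ell_2/x^2)$ per pair. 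Summing in $\ell \leq P$ gives, with $(A, B)$ uniform on the box,
$$
\mathbb{E}[\omega_P(F(A,B))] = \log\log P + O(1), \qquad \mathrm{Var}[\omega_P(F(A,B))] = O(\log\log P).
$$

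Chebyshev's inequality then bounds the number of $(A, B)$ in the box with $\omega_P(F(A,B)) \leq (\log\log P)/2$ by $O(x^5/\log\log P) = o(x^5)$. Since $(\log\log P)/2 > 3$ for $x$ large enough and $\omega(F) \geq \omega_P(F)$, this estimate controls the curves with $\omega(F(A, B)) \leq 3$, and by the opening reduction yields $\omega(N_E) \geq 2$ for all but $o(x^5)$ of the curves enumerated by height.

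The main technical point will be controlling the cumulative error coming from the mismatch between the finite box $[-x^2, x^2] \times [-x^3, x^3]$ and complete residue classes modulo $\prod_{\ell \leq P}\ell$ in the variance computation; provided $P$ grows slowly enough compared to $x$, the pairwise error $O(\ell_1\ell_2/x^2)$ summed over the $O(\pi(P)^2)$ pairs is dominated by $P^4/(x^2 \log^2 P)$, which stays $o(\log\log P)$, so the leading term survives and the estimate goes through.
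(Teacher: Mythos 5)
Your argument is correct, but it proceeds by a genuinely different route than the paper. The paper fixes $A$ and applies Selberg's sieve to the values $4A^3+27B^2$ as $B$ varies, bounding separately the pairs whose prime-to-$6$ part of the discriminant is a power of a small prime $p<z$ and those with no prime factor below $z$, then optimizes $z=x^{3/2-\varepsilon}$; you instead run a Tur\'an--Kubilius second-moment argument on $\omega\bigl(F(A,B)\bigr)$ over the whole box, using the local densities $\rho(\ell)$ (which, by the character-sum computation you indicate, are in fact exactly $1/\ell$ for $\ell\geq 5$), CRT multiplicativity, the box-versus-residue-class discrepancy with a slowly growing cutoff $P$, and Chebyshev's inequality. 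Your reduction $\omega(N_E)\geq\omega(F(A,B))-2$ is justified, since the normalization in (\ref{eqn:minform}) makes the model minimal at every $p\geq 5$, and the bookkeeping of the discrepancy terms (the one delicate point, which you flag) is handled adequately with $P=\log x$. What your approach buys is that it is more elementary --- no sieve weights, only a mean/variance computation --- and it proves more than the lemma asks: almost all curves of height at most $x$ satisfy $\omega(N_E)\gg\log\log\log x$, not merely $\omega(N_E)\geq 2$, which is in the spirit of the unboundedness of $\omega(N_E)$ used elsewhere in the paper. What the paper's sieve approach buys is explicit power savings in the exceptional count (its error terms are powers of $x$ rather than $x^5/\log\log\log x$-type savings), at the cost of invoking Selberg's sieve and a separate prime-power count; either method suffices for the stated $o(x^5)$ conclusion.
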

However, we include a self-contained proof due to needing a reference.
\begin{proof}
Consider an elliptic curve $E_{A,B}$ as in $(\ref{eqn:minform})$. Then the conductor $E_{A,B}$ is (up-to some bounded factor of $2$ and $3$) the product over all primes
$p$ dividing the discriminant $\Delta(E_{A,B})=-4A^3-27B^2$. Let us denote
$$\mathcal{E}(x)=\{A,B\in \Z: E_{A,B}~\mathrm{is~as~in}~(\ref{eqn:minform}),~H(E_{A,B})\leq x\}.$$
Then, it is enough to show that
\begin{equation}\label{eqn:toshow}
N(x)=\#\{A,B\in \mathcal{E}(x):\omega\left(\frac{4A^3+27B^2}{2^{\nu_2(4A^3+27B^2)}3^{\nu_3(4A^3+27B^2)}}\right)\leq 1\}=o(x^5).
\end{equation}
Consider a parameter $z>3$, which will be specified later. If a prime $p>z$ divides $4A^3+27B^2$ for some integers $A,B$, then we have $\left(\frac{-3A}{p}\right)=1$. For any integer $A$, consider $\mathcal{P}_A$ to be the set of primes $p$ for which $\left(\frac{-3A}{p}\right)=1$. We have the following estimate due to Selberg's sieve,
$$S(A,\mathcal{P}_{A},z)=\#\{|B|\leq x^3: p\mid 4B^2+27A^3,~p>3\implies p>z\}= \frac{x^{3}}{
\log z
}+O(z^2).$$
In particular, 
\begin{equation}\label{eqn:sieve}
   \#\{A,B\in \Z: |A|\leq x^2,~|B|\leq x^3: p\mid 4B^2+27A^3,~p>3\implies p>z\}=\frac{x^5}{\log z}+O(x^2z^2). 
\end{equation}
Now, using (\ref{eqn:sieve}), the quantity $N(x)$ at (\ref{eqn:toshow}) can be estimated as follows
\begin{equation}\label{eqn:for1}
N(x)\leq \sum_{p<z} \#\{A,B\in \Z : E_{A,B}\in \mathcal{E}(x),~\frac{4A^3+27B^2}{2^{\nu_2(4A^3+27B^2)}3^{\nu_3(4A^3+27B^2)}}=\mathrm{a~power~of}~p\}+\frac{x^{5}}{
\log z
}+O(x^2z^2).
\end{equation}
Note that for any integers $n,e,f$, and any prime $p>3$, we have $|4A^3+27B^2|=2^e3^fp^{n}$ for at most $x^{3}$ many integers $A,B\in \Z$ with $|A|\leq x^2, |B|\leq x^3$. There are at most $O(\log x)$ many choices for each of the $e,f$, and $n$, and at most $z$ choices for $p$. This shows that, $N(x)\ll z(\log x)^3+\frac{x^{5}}{
\log z
}+x^2z^2$. In particular, the result follows taking $z=x^{3/2-\varepsilon}$.
 \end{proof}
With this, we are now ready to prove Theorem~\ref{thm:avg}
\begin{proof}[Proof of Theorem~\ref{thm:avg}]
    It follows from Lemma~\ref{lem:highomega} that the conductor of almost the elliptic curves has at least two prime factors. Moreover, it follows from Grant's result in \cite{grant00} that almost all of them are $2$-torsion free over $\Q$. Now, the discussion in Section~\ref{sec:2lbd} implies that $\mathrm{Avg}_2(\mathrm{degree})\geq 1$. The proof from Bhargava, Shankar's result in \cite[Theorem 3]{BS13} is complete.
\end{proof}
\begin{remark}\rm
    Minimalist conjecture predicts that the average rank of elliptic curves should be 1/2; in particular, half of the curves have a rank $0$, and half have rank $1$. Since Watkins's conjecture is known to be true for all curves of rank $1$, we see that Watkins's conjecture is true for almost all elliptic curves. This is a stronger version of Theorem~\ref{thm:avg}.
\end{remark}

\subsection{Elliptic curves with odd modular degree}\label{sec:odddeg}
Calegari and Emerton undertook a significant endeavour to establish the validity of Watkins's conjecture, focusing specifically on elliptic curves with odd modular degrees. Their approach involved developing a distinct characterization for elliptic curves falling within this category. Through an in-depth analysis of the Atkin–Lehner involution on $X_0(N)$, they effectively demonstrated that when an elliptic curve $E$ has an odd modular degree, the curve's conductor $N$ is divisible by a maximum of two odd primes. Additionally, they established that the analytic rank of $E$ is necessarily even. They also show that if $E$ is an elliptic curve with an odd modular degree whose conductor $N$ has more than one prime factor, then $E$ possesses a rational point of order 2. 

Let $\T$ denote the Hecke algebra over $\Z$ acting
on  $S_2\left(\Gamma_0(N)\right)$, and let $\mathfrak{m}$ be a maximal ideal of $\T$ such that $\T/\mathfrak{m} = \mathbb{F}_2$.
 Let $f_E$ be the eigenform of level $\Gamma_0(N)$ and weight $2$ corresponding to the elliptic curve $E$. According to the theorem in \cite{ARS}, the modular degree $m_E$ of $E$ is divisible by $2$ if and only if $f_E$ satisfies a congruence modulo $2$ with a cusp form of level $\Gamma_0(N)$. The set of cuspidal eigenforms congruent to $f_E$ is indexed by the set $\mathrm{Hom}(\mathbb{T}_{\mathfrak{m}}\times \mathbb{Q}_2,\mathbb{Q}_{2})$. If $f_E$ does not satisfy any congruence with a cusp form, it means that there are no cuspidal eigenforms congruent to $f_E$. In other words, the set $\mathrm{Hom}(\mathbb{T}_{\mathfrak{m}}\times \mathbb{Q}_2,\mathbb{Q}_2)$ is trivial.
Therefore, $f_E$ satisfies no congruence with a cusp form of level $\Gamma_0(N)$ if and only if $\mathbb{T}_\mathfrak{m} = \mathbb{Z}_2$.
It is known that $\mathbb{T}_\mathfrak{m} = \mathbb{Z}_2$ equivalent to  that there exists no  nontrivial
minimal deformation of  $\GL_2(\mathbb{F}_2[x]/(x^2))$ of $\overline{\rho}$ , where $\overline{\rho}: \Gal(\overline{\mathbb{Q}}/{\mathbb{Q}}) \to \mathrm{GL}_2(E[2])$ is the residual representation corresponding to $E[2]$.

The case of an elliptic curve $E$ with an odd modular degree and a conductor that is a prime has been analyzed using the Galois deformation theory. Furthermore, they demonstrated that if the Galois representation $\overline{\rho}$ satisfies any of the following conditions:
\begin{itemize}
    \item $\overline {\rho}$ is totally real.
    \item $\overline {\rho}$ is unramified at 2.
  \item  $\overline {\rho}$ is ordinary, complex, and ramified at 2.
\end{itemize}

then the ring of Hecke operators $\mathbb{T}_\mathfrak{m}$ is not isomorphic to $\mathbb{Z}_2$. Consequently, it can be concluded that if $E$ has a prime conductor, it exhibits supersingular reduction at 2, and $\Q(E[2])$ is entirely complex.

In the analysis of the case where the conductor is a prime power, it was observed that there exist only finitely many elliptic curves of conductor $2^k$ for all positive integers $k$. Now, let's consider the case where the conductor is of the form $N=p^r (r>1)$, with $p$ being an odd prime. By considering the twist of $E$ by $\chi$, which is the unique quadratic character of conductor $p$, and noting that quadratic twists preserve $E[2]$,  we get   $f_E \equiv f_E\otimes \chi \pmod{2}$. Since $E$ has an odd modular degree, this implies that $f_E = f_E\otimes \chi$. Therefore, $E$ has complex multiplication. It is known that there are only finitely many elliptic curves with complex multiplication and conductor $p^r$. The authors determined the modular degree of these curves by consulting current databases and found that the only possible values for $N$ are 27, 32, 49, or 243.

Subsequently, building upon the work of Calegari and Emerton \cite{CE09} and Agashe et al. \cite{ARS}, Yazdani examined the elliptic curves with odd congruence numbers. They found that the conductors of all such elliptic curves, with a few exceptions, can be categorized as $p, pq, 2p, 4p$.  Each of these categories was meticulously examined to demonstrate that all elliptic curves falling within these classifications possess a finite Mordell-Weil group, with the only potential exception being when the conductor is prime. 
In other words, Yazdani also proved that if an elliptic curve $E$ defined over $\mathbb{Q}$ has an odd modular degree, then its rank is zero unless it possesses a prime conductor and an even analytic rank. For a more comprehensive statement, please refer to \cite[Theorem 3.8]{yazdani11}.

Furthermore, Kazalicki and Kohen \cite{KK18watkinscorrigendum} settled the case of elliptic curves with prime conductors by proving that those with an odd congruent number have a rank of zero. 
To establish their results, Kazalicki and Kohen utilized the Gross association, as described in Subsection \ref{sec:gross}, to establish a connection between the elliptic curve and an element of the Picard group. They then proved that if a rational elliptic curve $E$ has a prime conductor and a non-zero rank, the modular degree of $E$ is always even. Let us give a brief idea of the argument in the next section.

\subsection{Elliptic curves with prime conductor}\label{sec:primecond}
Let $E/\Q$ be any elliptic curve of positive rank, Mestre's result in \cite{Mestre86} implies that 
$m_E=\langle v_E, v_E \rangle=\sum_{i=1}^{n} w_iv_{E}(e_i)^2$. On the other hand, it turns out that
$\sum_{i=1}^{n}v_E(e_i)=0$, and all the $w_i$ are $1$ when $p=1 \pmod 4$. We immediately have that $m_E$ is even in this case. For the case $p=3 \pmod 4$, one needs to use \cite[Proposition 2.4]{KK18watkins} and Gross-Kudla's formula \cite[Proposition 2.5]{KK18watkins}. This settles down the conjecture for $\mathrm{rank}(E(\Q))=1$ case. For rank $2$ case, Gross-Kudla's formula implies $\sum_{i=1}^{n}v_E(e_i)^3=0$. The reader may take a look at \cite{KK18watkins} for more details. Consequently we have $\sum_{i=1}^{n}v_E(e_i)^2=0\pmod 4$, if each of the odd terms in $\{v_E(e_i)\}_{1\leq i
\leq n}$ appears an even number of times. This is literally true, provided that the following conjecture holds.
\begin{Conjecture}\label{conj:supsing}
Let $E/\Q$ be any elliptic curve of conductor $p$ and $\rank\left(E\left( \Q\right)\right) >0$. Then the coefficient $\nu_E(e_i)$ of $e_i$ in $v_{E}$ is even whenever $j(e_i)\in \mathbb{F}_p$.
\end{Conjecture}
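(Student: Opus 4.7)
The plan is to exploit the Atkin--Lehner involution $w_p$ on $\mathfrak{X}$ and to combine it with the sum identities available from Mestre and Gross--Kudla. The starting point is the identification of the action of $w_p$ on the supersingular basis: $w_p$ permutes the $e_i$ via the Frobenius twist $e_i \mapsto e_i^{(p)}$ (since $(E,\ker F)\mapsto (E^{(p)},\ker V)$), so its fixed points on the set $\{e_i\}$ are exactly those with $j(e_i)\in \mathbb{F}_p$, while the remaining $e_i$ pair up. Denote the $\mathbb{F}_p$-supersingular subset by $\mathcal{S}$. This reduces the problem to controlling $v_E(e_i)$ for $e_i\in\mathcal{S}$.

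The Atkin--Lehner eigenvalue of $f_E$ at $p$ is $-\varepsilon(E)$, where $\varepsilon(E)=(-1)^{\mathrm{rank}_{\mathrm{an}}(E)}$ is the sign of the functional equation of $L(E,s)$. By the parity conjecture, this sign matches $(-1)^{\mathrm{rank}(E(\Q))}$. In the even-rank case one has $w_p(f_E)=-f_E$, hence $w_p(v_E)=-v_E$, and evaluating at any fixed point $e_i\in\mathcal{S}$ yields $v_E(e_i)=-v_E(e_i)$, so $v_E(e_i)=0$, trivially even. This handles ranks $2,4,\dots$ at a single stroke.

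The genuine difficulty is the odd-rank case (notably $\mathrm{rank}(E(\Q))=1$), where $w_p(v_E)=v_E$ and the Atkin--Lehner constraint is vacuous at the fixed points. Here I would first extract the easy consequences of two global identities. The vector $v_E$ is orthogonal to the Eisenstein line, so $\sum_i v_E(e_i)=0$; since the contribution from $\{e_j,e_j^{(p)}\}$-pairs is automatically $2\sum v_E(e_j)$, this forces $\sum_{e_i\in\mathcal{S}} v_E(e_i)\equiv 0\pmod 2$. The Gross--Kudla formula $\sum_i v_E(e_i)^3=0$, applicable since $f_E$ is non-Eisenstein, gives the same mod~$2$ constraint, namely that the number of $e_i\in\mathcal{S}$ with $v_E(e_i)$ odd is even. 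To upgrade ``even number of bad $e_i$'s'' to ``no bad $e_i$'s'', I would then invoke the Jacquet--Langlands description: $v_E$ corresponds to a Hecke eigenfunction on $B^\times\backslash \widehat{B}^\times/\widehat{\mathcal{O}}^\times$ for the quaternion algebra $B$ ramified exactly at $\{p,\infty\}$, and the classes in $\mathcal{S}$ are those admitting an optimal embedding of $\Z[\sqrt{-p}]$. Genus theory for $\Q(\sqrt{-p})$ controls the $2$-primary structure of these classes, and the aim is to exhibit $v_E \bmod 2$ as a function vanishing on the corresponding genus-theoretic sublattice, using the positive rank hypothesis to produce a Heegner-type element that witnesses the vanishing.

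The main obstacle is precisely this last step, the passage from a global mod~$2$ congruence to pointwise vanishing on $\mathcal{S}$. The linear and cubic identities saturate at the mod~$2$ level, so any additional leverage must come from the integral Hecke module structure and, most naturally, from the deformation-theoretic $R=\T$ machinery used by Dummigan in the odd-modular-degree setting. I would try, following that blueprint, to recast the desired evenness as the vanishing of a Selmer class for $\mathrm{Sym}^2 E$ mod~$2$, and use the assumption $\mathrm{rank}(E(\Q))\geq 1$ together with a Kolyvagin derivative of a Heegner point to force it. This $2$-adic heart of the odd-rank case is the subtle point that the Atkin--Lehner and sum-identity arguments cannot reach on their own, and is the reason the conjecture has so far resisted a complete proof.
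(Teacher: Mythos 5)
First, note that the statement you are trying to prove is stated in the paper as a \emph{conjecture}: the paper offers no proof of it, only partial results --- the root-number $-1$ case due to Kazalicki--Kohen (recalled in Section~\ref{sec:primecond}) and the special cases of Theorem~\ref{thm:supsingconj} (e.g.\ $s_p=2$) obtained from the Gross--Waldspurger formula. Your proposal is likewise not a proof: as you yourself say, it stalls exactly at the step that would upgrade the mod~$2$ sum identities to pointwise vanishing on the $\mathbb{F}_p$-rational supersingular classes, and the genus-theory/Heegner/$R=\mathbb{T}$ program you sketch for that step is only a hope, not an argument. So there is a genuine gap by construction. More importantly, the one case you claim to settle ``at a single stroke'' is settled with the wrong sign, and this inverts which case is known and which is open.

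Concretely: on the supersingular module the operator realized by the Frobenius permutation $e_i\mapsto e_{\bar i}$ is the Hecke (Brandt) operator $t_p$, and by Gross's construction $t_p v_E=a_p v_E$ with $a_p=\pm1$ equal to the global root number of $E$ (for prime conductor the Atkin--Lehner eigenvalue is $-a_p$, so $W_pf_E=-f_E$ corresponds to $a_p=+1$). Hence evaluating at a fixed point $j(e_i)\in\mathbb{F}_p$ gives $v_E(e_i)=-v_E(e_i)=0$ precisely when the root number is $-1$, i.e.\ odd analytic rank --- this is Kazalicki--Kohen's theorem, and it is the rank~$1$ case you describe as ``the genuine difficulty.'' In the even positive rank case ($a_p=+1$) one gets $t_pv_E=v_E$, which is vacuous at the fixed points; your contrary conclusion comes from transporting the eigenvalue $W_pf_E=-f_E$ to the Brandt module as if $W_p$ acted there by the Frobenius permutation, but the permutation is $t_p$ (eigenvalue $a_p=+1$ in that case), not $W_p$, so the sign is opposite. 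Consequently your Heegner/Kolyvagin program is aimed at a case that is already known, while the genuinely open case --- root number $+1$, e.g.\ rank $2$, which the paper attacks only under extra hypotheses such as $\Delta_E>0$ together with an auxiliary prime $\ell$ as in (\ref{eqn:oddcond}), or via Theorem~\ref{thm:supsingconj} --- is left untouched. A smaller inaccuracy: the Gross--Kudla identity $\sum_i v_E(e_i)^3=0$ does not follow merely from $f_E$ being non-Eisenstein; it reflects the vanishing of the central triple-product $L$-value, which is where the positive-rank hypothesis enters.
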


When the root number of $E$ is $-1$, Kazalicki and Kohen showed that all such $\nu(e_i)$ are, in fact, $0$. This is true because the root number condition implies $t_p(v_E)=-v_E$. For each $1\leq i\leq n$, set $1\leq \bar{i}\leq n$ be the unique index such that $E_{\bar{i}}=E^p_i$. Note that, $i=\bar{i}$ if and only if, $j(E_i)\in \mathbb{F}_p$. With this, we have \cite{KK17singular} that $t_p(e_i)=e_{\bar{i}}$, which gives a proof of the conjecture for root number $-1$ case. 
In the root number $1$ case, the conjecture is known to be true if  $E$ has a positive discriminant and there  exists a prime $\ell$ such that
\begin{equation}\label{eqn:oddcond}
\left(\frac{-p}{\ell}\right)=-1,~a(\ell)=1 \pmod 2.
\end{equation}
Under these conditions, we have $B_{i,j}(\ell)=0\pmod 2$, whenever $i,j \in S_p$. Moreover, the conditions in (\ref{eqn:oddcond}) are satisfied for any $E/\Q$ of conductor $p$, satisfying $E(\Q)[2]$ is trivial and $\Delta_E>0$. 

The main ingredients are $B_{i,j}(\ell)=0\pmod 2$ and 
$a(\ell)=1 \pmod 2$.
In the case of $\Delta_E<0$, the condition $\left(\frac{-p}{\ell}\right)=-1$ implies $\ell$ is inert in $\Q(\sqrt{-p})$ which implies that $2$ divides the order of  $\mathrm{Frob}_{\ell}$, which implies that $a(\ell)$ is not odd. This is one of the main hurdles in the case of root number $1$ and negative discriminant. However, Kazalicki and Kohen's proof also shows that it is enough to have some integer $m$ for which $a(m)=1 \pmod 2$ and all the $B_{i,j}(m)$ are even. From the commutativity of Brandt's matrices, and \cite[Proposition 20]{KK17singular} it follows that $\widetilde{B}(m)=\left(B_{i,j}(n)\right)_{i,j\in S_p}\pmod 2$ is also commutative inside $M_{s_p}(\Z/2\Z)$. Therefore it may be fruitful to analyze the structure of those $\widetilde{B}(m)$, whose one of the eigenvalues is $1$. Also, note that each $B_{i,j}(m)$ gives a modular form $\theta_{i,j}$, from which we get another modular form $\theta'_{i,j}:=\theta_{i,j}-E_2$, where $E_2=1-24\sum_{n=1}^{\infty}\frac{nq^n}{1-q^n}$. Now we require the cusp forms $f_E$ and $\theta'_{i,j}$ to have a common Fourier coefficient of opposite parity. A statement about the independency of the Fourier coefficients, e.g., \textit{Generalized Sato-Tate hypothesis} \cite[Section 4.1]{BBG22}, could be helpful in this approach.      

We end the discussion of this section with the following remark. As long as the prime conductors are concerned, this is predicted by Watkins in \cite[Section 4.2]{MW02} the following.
\begin{Conjecture}
    Let $N:=p$ be a prime, and $\nu_2(m_E)=0$, then either $p=17$, or $p\equiv 3 \pmod 8$, or if $p$ is of the form $x^2+64$.
\end{Conjecture}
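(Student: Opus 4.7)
My strategy would be to argue by contrapositive: assume $p$ is a prime conductor not of the listed form (so $p \neq 17$, $p \not\equiv 3 \pmod 8$, and $p$ is not of the form $x^2 + 64$), and deduce $2 \mid m_E$. First, I would use the Atkin-Lehner argument of Section~\ref{sec:2lbd} to reduce to the case where $E$ has split multiplicative reduction at $p$; in the non-split case, the parametrization factors through $X_0(p)/w_p$ and $m_E$ is immediately even. Second, I would invoke Gross's identity of Section~\ref{sec:gross}, writing
\[
m_E\,|E(\Q)_{\mathrm{tor}}| \;=\; \sum_{i=1}^{n} w_i\, v_E(e_i)^2,
\]
and work modulo $2$. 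The parity of the right-hand side is controlled by the cardinality of $\{i : w_i\, v_E(e_i)\ \text{odd}\}$, and the three cases of the residue class $p \bmod 12$ (governing the values $w_i \in \{1,2,3\}$) must be treated in turn.

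The principal case-split is on $p \bmod 8$. When $p \equiv 1 \pmod 4$ every $w_i$ equals $1$, and the Eisenstein orthogonality relation $\sum_i v_E(e_i) = 0$ already forces an even number of odd coordinates of $v_E$. To upgrade this to genuine $2$-divisibility of $\sum v_E(e_i)^2$, I would exploit an auxiliary Hecke eigenvalue in the style of \cite{KK18watkinscorrigendum}: choose a prime $\ell$ with $a_\ell(E)$ odd and $\left(\tfrac{-p}{\ell}\right) = -1$, so that $B_{i,j}(\ell) \equiv 0 \pmod 2$ on the subset $S_p$ of supersingular $j$-invariants lying in $\mathbb{F}_p$. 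A Chebotarev-type argument applied to the image of $\overline{\rho}_E : G_\Q \to \mathrm{GL}_2(\mathbb{F}_2)$ (which is trivial, $\Z/3\Z$, or $S_3$) and to the quadratic character cut out by $\Q(\sqrt{-p})$ should produce such an $\ell$ outside a thin exceptional set of primes $p$. Concretely, the obstruction to finding such $\ell$ is expected to correspond exactly to the conjecturally allowed list: the hypothesis $p \equiv 3 \pmod 8$ forces $2$ to be inert in $\Q(\sqrt{-p})$ and destroys the simultaneous congruences, while $p = x^2 + 64$ corresponds to $2$ being the norm of an element of a specific ray class in $\Q(\sqrt{-p})$.

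The hardest part will be the case $p \equiv 1 \pmod 8$, in particular disposing of all exceptions beyond $p = 17$ and $p = x^2 + 64$. Here one needs a very refined local analysis at $2$: by Calegari-Emerton, odd $m_E$ forces $E$ to have supersingular reduction at $2$ and $\Q(E[2])$ to be totally complex, so $\mathbb{T}_\mathfrak{m} \cong \Z_2$ and the minimal deformation ring of $\overline{\rho}_E$ has no non-trivial deformations to $\mathbb{F}_2[\varepsilon]$. Combining this with the ray-class-field description of split primes in $\Q(i)$ and the conductor-discriminant obstruction, I would expect to show that this deformation ring collapses to $\Z_2$ only for $p$ in the listed family. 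Matching the deformation-theoretic criterion with the elementary arithmetic condition $p = x^2 + 64$ is the crux; I expect this to reduce to an explicit computation in the Galois cohomology of $\mathrm{GL}_2(\mathbb{F}_2) \cong S_3$ with coefficients twisted by local characters at $p$ and $2$, with the isolated exception at $p = 17$ traced back to a low-level coincidence in the Brandt module for $B_{2,17}$. Verifying by direct computation that $p = 17$ is indeed the only sporadic exception, rather than one of an unknown sequence of them, will likely require a finite enumeration based on effective bounds arising from Mestre-Oesterl\'e type estimates.
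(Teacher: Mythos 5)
The statement you set out to prove is not a theorem of the paper at all: it is an open conjecture, attributed to Watkins \cite[Section 4.2]{MW02}, which the paper merely records at the end of Section~\ref{sec:primecond} without any proof. So there is no paper argument to compare yours against, and your proposal should be judged as an attempt on an open problem; as such it contains genuine gaps rather than a complete route. The first reduction (non-split multiplicative reduction at $p$ forces $w_p(f_E)=+1$, the parametrization factors through $X_0(p)/W_p$, and $2\mid m_E$) is fine and standard. But the auxiliary-prime step you borrow from Kazalicki--Kohen does not do what you need: the existence of $\ell$ with $a(\ell)$ odd and $\left(\frac{-p}{\ell}\right)=-1$ is exactly what fails in the hard case, since (as the paper itself points out in Section~\ref{sec:primecond}) when $\Delta_E<0$ the condition $\left(\frac{-p}{\ell}\right)=-1$ makes $\ell$ inert in $\Q(\sqrt{-p})$, so $2$ divides the order of $\mathrm{Frob}_\ell$ in $\mathrm{Gal}(\Q(E[2])/\Q)$ and $a(\ell)$ is forced to be even; no Chebotarev argument can produce such an $\ell$ there. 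Moreover that whole Gross/Brandt-matrix machinery is designed to show $m_E$ is even for curves of \emph{positive rank}; it says nothing about ruling out odd $m_E$ for rank-zero curves of conductor $p\equiv 1\pmod 8$, which is where the content of the conjecture lies.

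The second half of your proposal is stated as a sequence of expectations rather than arguments. That $\mathbb{T}_{\mathfrak m}\cong\Z_2$ (equivalently, no nontrivial minimal deformation of $\overline\rho$ to $\mathrm{GL}_2(\mathbb{F}_2[x]/(x^2))$) should occur \emph{only} for $p=17$, $p\equiv 3\pmod 8$, or $p=x^2+64$ is precisely the open assertion; Calegari--Emerton give necessary conditions (supersingular reduction at $2$ and $\Q(E[2])$ totally complex, or a rational $2$-torsion point, which via Setzer's classification yields $p=17$ or $p=u^2+64$), but no cited result converts the remaining deformation-theoretic condition into the congruence $p\equiv 3\pmod 8$, and you give no computation that would. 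Finally, the proposed ``finite enumeration based on effective bounds'' to certify $p=17$ as the unique sporadic exception cannot close the argument: the statement quantifies over infinitely many primes $p$, and no effective reduction to a finite check is known or supplied. In short, your outline reproduces the known partial results and then asserts the unknown part as expected output of unspecified computations; the conjecture remains open after your proposal.
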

\subsubsection{A conjecture on the supersingular coefficients}
In this section, we discuss the Conjecture~\ref{conj:supsing}. In \cite{KK17singular}, Kazalicki and Kohen studied modular forms over $\overline{\mathbb{F}_{p}}$ in the sense of Katz's modular form. If we are a bit more specific, there exists a polynomial $P_E(x)\in \Z[x]$ whose roots in $\mathbb{F}_p$ are often supersingular zeros. In particular, they showed that if $j\in \mathbb{F}_p$ is $j$ invariant of a supersingular elliptic curve $E_i$ over $\overline{\mathbb{F}}_p$, then $P_E(j)=0\pmod p$ if and only if, $v_{E}(e_i)=0 \pmod p$. In the sense of Katz, the condition $P_E(j)=0\pmod p$ corresponds to a rule that is $0$ acting on $E_i$. Finally, a study of Hecke operators on the space of modular forms corresponding to supersingular elliptic curves, again in the sense of Katz. This, of course, does not settle the conjecture; however, this shows that in the case of root number $-1$ case, any supersingular $j$ invariant in $\mathbb{F}_p$ is a root of $P_E(x)\pmod p$. We denote $s_p$ to be $\# S_p$, i.e., the number of isomorphism classes of supersingular elliptic curves over $\mathbb{F}_p$. Following a remark in Kazalicki-Kohen in \cite{KK17singular}, we can deduce the following in support of the conjecture.
\begin{theorem}\label{thm:supsingconj}
   Let $E/\Q$ be any elliptic curve of positive rank having prime conductor $p$. Take any prime $p$ satisfying $\left(\frac{-D}{p}\right)= -1$, for some $D\in \{1, 3, 7, 11, 19, 43, 67, 163\}$. Then the following implications hold.
   \begin{enumerate}
\item[(a)] Conjecture~\ref{conj:supsing} is true for any such prime $p$ with $s_p=2$.
       \item[(b)] If $s_p>2$, and  $s_p$ is even, there are at least two distinct $i,j\in S_p$ for which $v_E(e_i)=v_E(e_j)=0\pmod 2$.
   \end{enumerate}

\end{theorem}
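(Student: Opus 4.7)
The plan is to split the argument according to the sign of the functional equation of $E$. In the root number $-1$ case, the identity $t_p(v_E) = -v_E$ (recalled from Kazalicki--Kohen \cite{KK17singular}) together with $t_p(e_i) = e_i$ for $i \in S_p$ (because $j(e_i) \in \mathbb{F}_p$ forces $e_i \cong e_i^{(p)}$) immediately gives $v_E(e_i) = 0$ for all $i \in S_p$, so both (a) and (b) become trivial. I therefore concentrate on the root number $+1$ case; here Kolyvagin's theorem upgrades the positive-rank hypothesis to analytic rank at least two (if algebraic rank were $1$ with root number $+1$, the analytic rank would be even and at most $1$ would force it to be $0$, contradicting Kolyvagin), so in particular $L(E, 1) = 0$.

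The central step will be to invoke Gross's formula from \cite{Gross85} for the field $K = \Q(\sqrt{-D})$: since $D$ is a Heegner number, $K$ has class number one, and since $\left(\frac{-D}{p}\right) = -1$, the prime $p$ is inert in $K$. Under the identification of $\mathrm{Pic}(X)$ with the ideal class module of the quaternion algebra $B$ ramified at $\{p, \infty\}$, Gross's formula reads schematically
$$L(E/K, 1) \;=\; \kappa_{E, K} \cdot \frac{\langle v_E,\, x_K \rangle^2}{\langle v_E, v_E \rangle},$$
where $x_K \in \mathrm{Pic}(X)$ is the CM vector built from optimal embeddings $\mathcal{O}_K \hookrightarrow \mathrm{End}(e_i)$, and $\kappa_{E, K}$ is an explicit nonzero constant. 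Because $h(K) = 1$, Eichler's embedding counts single out exactly one index admitting such embeddings, namely the supersingular $e_D$ whose $j$-invariant is the reduction mod $p$ of the (integral) $j$-invariant of the CM elliptic curve with CM by $\mathcal{O}_K$; this $e_D$ lies in $S_p$. Thus $x_K = m \cdot e_D$ for some positive integer $m$, and $\langle v_E, x_K \rangle$ is a nonzero rational multiple of $v_E(e_D)$. Combining $L(E/K, 1) = L(E, 1) L(E \otimes \chi_K, 1) = 0$ with Gross's formula then yields $v_E(e_D) = 0$.

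The remaining even coordinates will come from a mod-$2$ parity identity. From $t_p(v_E) = v_E$ in the root number $+1$ case one gets $v_E(e_j) = v_E(e_{\bar j})$ for every $j$; the orthogonality $\sum_i w_i v_E(e_i) = 0$, grouped into Frobenius orbits outside $S_p$ (where $\bar j \neq j$ and $j(e_j) \notin \{0, 1728\}$ forces $w_j = w_{\bar j} = 1$), produces
$$\sum_{i \in S_p} w_i\, v_E(e_i) \;\equiv\; 0 \pmod 2.$$
Modulo $2$, the weights $w_i \in \{1, 2, 3\}$ satisfy $w_i \not\equiv 0$ except at the (at most one) $w_i = 2$ index, so this is a genuine parity constraint on $\{v_E(e_i)\}_{i \in S_p}$. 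For part (a) with $s_p = 2$ and the Gross step having produced one even value ($v_E(e_D) = 0$), the parity constraint forces the second value to be even. For part (b) with $s_p \geq 4$ even, I know one coordinate is zero and the total number of odd coordinates is even; hence at most $s_p - 2$ coordinates are odd, leaving at least two even ones.

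The hardest part will be the precise invocation of Gross's formula in the form needed: verifying that $x_K$ is genuinely supported on the single index $e_D$ (rather than mixing contributions from other supersingular orders admitting optimal embeddings of sub-orders of $\mathcal{O}_K$), and confirming that $\kappa_{E, K}$ is nonzero for each of the eight listed Heegner $D$. A smaller technical hurdle is the automorphism-weight bookkeeping when $p \equiv 7, 11 \pmod{12}$: at a $w_i = 2$ index the parity identity is uninformative, so one must either feed in a second Heegner discriminant (for example $D = 1$, controlling the CM-by-$\Z[i]$ coordinate directly via Gross) or verify that no such bad index lies in $S_p$ for the prime at hand.
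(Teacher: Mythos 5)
Your overall route is the same as the paper's: positive rank gives $L(E,1)=0$, the Gross(--Waldspurger) formula for an imaginary quadratic field of class number one in which $p$ is inert forces the single coordinate of $v_E$ at the corresponding CM--supersingular index (which lies in $S_p$) to vanish, and a parity relation over $S_p$ finishes both parts. The root-number dichotomy you set up is harmless but unnecessary (the paper runs the Gross--Waldspurger step uniformly), and the worry about the support of $x_K$ is a non-issue once $h(-D)=1$: Eichler's count of optimal embeddings of the full order $\mathcal{O}_{-D}$ singles out exactly one index, so $b_D$ is supported at a single $e_i$ with $i\in S_p$, exactly as in the paper.

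The genuine gap is in your parity identity. The relation you invoke, $\sum_i w_i v_E(e_i)=0$, is not the correct Eisenstein orthogonality: since $\langle e_i,e_j\rangle=w_i\delta_{ij}$ and the Eisenstein eigenvector is $\sum_i e_i/w_i$, orthogonality of the cuspidal eigenvector gives the \emph{unweighted} degree-zero relation $\sum_i v_E(e_i)=0$, which is what the paper uses (quoting Kazalicki--Kohen). Your weighted congruence $\sum_{i\in S_p} w_i v_E(e_i)\equiv 0 \pmod 2$ is both unjustified and too weak: at an index with $w_i=2$, i.e.\ $j(e_i)=1728$, which is supersingular exactly when $p\equiv 3\pmod 4$ (a case that does occur under the hypotheses, e.g.\ via $D=1$), the congruence says nothing about $v_E(e_i)\bmod 2$. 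Then part (a) with $s_p=2$ does not close, and your count in part (b) fails as well: for instance with $s_p=4$, the Gross coordinate zero, two odd unit-weight coordinates and an odd value at the weight-two index satisfy your congruence yet leave only one even coordinate. You flag this as a ``smaller technical hurdle'' but do not resolve it, so as written the proof is incomplete precisely when $1728$ is a supersingular $j$-invariant. The fix is to replace your relation by $\sum_i v_E(e_i)=0$ combined with $v_E(e_i)\equiv v_E(e_{\bar i})\pmod 2$, which yields $\sum_{i\in S_p} v_E(e_i)\equiv 0\pmod 2$ with all coefficients equal to $1$; with that correction your argument coincides with the paper's proof. (Your alternative remedy of also feeding in $D=1$ when $p\equiv 3\pmod 4$ would work too, since $\left(\frac{-1}{p}\right)=-1$ holds automatically there, but you did not carry it out.)
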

Before proving this, let us first define the required terminology and recall the preliminary results related to the supersingular elliptic curves. For any integer $D=3 \pmod 4$, we say that an elliptic curve over any field $F$ has CM over $\mathcal{O}_{-D}=\mathbb{Z}[\frac{1}{2}(D+\sqrt{-D})]$ if, $\mathcal{O}_{-D}$ contains maximally in the endomorphism ring of the elliptic curve over $\overline{F}$. Then supersingular elliptic curves with $j$-invariant in $\mathbb{F}_p$ correspond to those $D$ with those $D$ for which $p$ is ramified or inert in $\Q(\sqrt{-D})$. On the other hand, there are $h(-D)$ many $\overline{\Q}$ isomorphism classes of elliptic curves over $\overline{\Q}$ with CM by $\mathcal{O}_{-D}$, where $h(-D)$ is the class number of $\Q(\sqrt{-D})$. Furthermore, we denote $h_i(-D)$ to be the number of optimal embeddings of the order of discriminant $-D$ into $\mathrm{End}(E_i)$ modulo conjugation by $\mathrm{End}(E_i)^{*}$ and $u(-D)$ is the number of units of the order. Then we set $b_D=\frac{1}{u(-D)}\sum_{i=1}^{n}h_i(-D)v_E(e_i).$ 
\begin{proof}[Proof of Theorem~\ref{thm:supsingconj}]
    We first claim that the number of odd terms is always even. This is immediate from the fact that $\sum_{i=1}^{n} v_E(e_i)=0$ and that, $v_E(e_i)=v_E(e_{\bar{i}})\pmod 2$, in particular,
    $$\sum_{i\in S_p} v_E(e_i)\equiv 0 \pmod 2.$$
This shows that if there is one even $v_E(e_i)$, then there are at least two even terms since $s_p$ is even. Since $\mathrm{rank}(E(\Q))>0$, we have $L(E,1)=0$. In particular, by Gross-Waldspurger formula \cite{Gross85}, we have $\langle v_E,b_D\rangle =0$ for any $D>0$ with $\left(\frac{-D}{p}\right)=-1$. 

For proof of part (a), note that since $p$ and $D$ satisfy the imposed conditions, $p$ is inert in $\Q(\sqrt{-D})$. In particular, $b_D$ is supported only on the indices in $S_p$. Moreover, since $\Q(\sqrt{-D})$ has class number $1$, we have $h_i(D)=1$ for exactly one $i\in S_p$, and that $E_i$ has CM by $\mathcal{O}_{-D}$. Then the condition $\langle v_E,b_D \rangle=0$ implies $v_E(e_i)=0$. In particular, if $s_p=2$, then $v_E(e_{i'})=0$ for the other $i'\in S_p$, as well. Now for part (b), the proof follows from the observation in the previous paragraph since $s_p$ is even.

\end{proof}
\begin{remark}\rm
It is known \cite{Gross85} that 
\begin{align*}
    s_p 
    =\begin{cases}
			\frac{1}{2}h(-p), & \text{if $p\equiv 1 \pmod 4$}\\
           2h(-p), & \text{if $p \equiv 3 \pmod 8$}\\
           h(-p),& \text{if $p\equiv 7 \pmod 8$}\\
   		 \end{cases}
\end{align*}
\end{remark}
We turn into discussing the case of any arbitrary prime power. In this case, it turns out that $\mathrm{rank}(E(\Q))\leq 1$ provided that $E(\Q)[2]$ is non-trivial, see \cite[Lemma 2.4]{Setzer75} for details. In particular, the Watkins conjecture is true in this case. Furthermore, a complete classification of all such elliptic curves could be found in \cite{Setzer75}. The next section will briefly discuss the conjecture for any elliptic curves with a non-trivial $2$-torsion defined over $\Q$.

\subsection{Elliptic curves with non-trivial $2$-torsion}\label{sec:nontriv2t}
In this section, we assume that $E(\mathbb{Q})[2]$ is either $(\Z/2\Z)$ or $(\Z/2\Z)^2$. One of the important advantages of this assumption is that $2\mid \# E(\mathbb{F}_p)$ for any prime $p$ co-prime to $2N$, since $E(\mathbb{Q})[2]$ embeds in $E(\mathbb{F}_p)$. Also, in this case, we have a rank bound in terms of the number of prime factors of $N$. For any squarefree integer $D$, denote $E^{(D)}$ be the twist of $E$ by $D$. Recall that the quadratic twist $E^{(D)}$ is the elliptic curve defined by $y^2 = x^3 + aD^2x + bD^3$. Note that $E^{(D)}$ is isomorphic to $E$ over the field $\Q(\sqrt{
D})$. From (\ref{eqn:height}), we have 
\begin{equation}\label{eqn:lbdtwist}
\nu_2(m_{E^{(D)}})\gg \sum_{\substack{p\mid D\\ (p,~2N)=1}}\nu_2\left((p+1)(p+1-a(p))(p+1+a(p))\right)\gg 3(\omega(D)-\omega(N)).
\end{equation}
Then for any $D$ with a sufficiently large number of prime factors, the proof of Watkins's conjecture for $E^{(D)}$ (see \cite{EP21}) follows from the following rank bound
\begin{equation}\label{eqn:rankbdd}
\mathrm{rank}(E(\Q))\leq 2\alpha+\mu-1\leq 2\omega(N)-1,
\end{equation}
where $\alpha$ and $\mu$ respectively denote the number of additive and multiplicative reductions of $E$. In particular, given any $E/\Q$, the number of $|D|\leq x$ for which the twisted curve $E^{(D)}$ does not satisfy Watkins's conjecture is only of order $\frac{x(\log \log x)^{k(E)}}{\log x}$, where $k(E)=6+5\omega(N)-\nu_2(m_E/c^2_E)$. In the semistable case, or at least when $E$ is semistable away from a finite set, $k(E)$ is linearly bounded as a function of $\omega(N_E)$.

Recall that the rank bound at (\ref{eqn:rankbdd}) gives the proof of Watkins's conjecture when $\omega(N)=1$, i.e., when $N$ is a prime power. In fact, in the case when $E$ has a prime power conductor, it is shown in \cite{caro22watkinss} that any quadratic twist $E^{(D)}$ satisfies Watkins's conjecture. The proof essentially follows from the classification in \cite{Setzer75}, where a much stronger estimate holds in (\ref{eqn:lbdtwist}) and (\ref{eqn:rankbdd}).
More precisely, we have
$$\nu_2(m_{E^{(D)}})\geq \nu_2\left(\frac{m_E}{c^2_E}\right)+\nu_2\left(\frac{||f||^2_{N^{(D)}}}{||f_E||^2_N}\right)+\frac{1}{6}\nu_2\left(\frac{\Delta_{E^{(D)}}}{\Delta_E}\right).$$
Essentially from Setzer's classification, the following estimates \cite{caro22watkinss} hold
$$ \nu_2\left(\frac{m_E}{c^2_E}\right)\geq -1,~\nu_2\left(\frac{||f||^2_{N^{(D)}}}{||f_E||^2_N}\right)\gg \omega(D),~\nu_2\left(\frac{\Delta_{E^{(D)}}}{\Delta_E}\right)=O(1),$$
where the implicit constants are absolute.

In this direction, we also recall the result of Caro and Pasten in \cite{CP22}. They showed that $E$ satisfies Watkins's conjecture if $E(\Q)[2]=\Z/2\Z$, and all primes of bad reductions, are either non-split or if the number of primes of non-split reduction is odd. In this section, we shall give an extension of this result to the case $E(\Q)[2]=(\Z/2\Z)^2$.
\begin{theorem}\label{thm:CP22extension}
Let $E/\Q$ be any semistable elliptic curve such that $E(\Q)[2]=(\Z/2Z)^2$. Then Watkins's conjecture is true for $E$ provided that one of the following holds.
\begin{enumerate}
    \item[(a)] $E$ has non-split multiplicative reductions at all primes dividing $N_E$, and $\omega(N_E)$ is odd.
    \item[(b)] $E$ has at least one split multiplicative reductions, and $\mathrm{dim}_{\mathbb{F}_2}(\Sha[2])> 1$.
\item[(c)] Parity conjecture is true for $E,~\mathrm{dim}_{\mathbb{F}_2}(\Sha[2])= 1$ and number of non-split multiplicative reductions is even.
   \item[(d)]  $\mathrm{dim}_{\mathbb{F}_2}(\Sha[2])> 2$.
\end{enumerate}
Moreover, if $E(\Q)[2]=\Z/2\Z$, then Watkins's conjecture is true for $E$ provided that, either the conditions at \cite[Theorem 1.2]{CP22}, or if $\Sha[2]$ is non-trivial.
\end{theorem}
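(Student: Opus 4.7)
The plan is to combine the Atkin--Lehner lower bound on $\nu_2(m_E)$ from Section~\ref{sec:2lbd} with upper bounds on $\mathrm{rank}(E(\Q))$ coming from complete $2$-descent, verifying each of (a)--(d) by tracking how many factors of $2$ each ingredient contributes. Section~\ref{sec:2lbd} immediately yields
\[
\nu_2(m_E) \;\geq\; \omega(N_E) - \dim_{\mathbb{F}_2}(W/W') - 2,
\]
which reads $\omega(N_E)-2$ when every bad reduction is non-split and $\omega(N_E)-3$ otherwise, while the $2$-descent short exact sequence $0 \to E(\Q)/2E(\Q) \to \mathrm{Sel}_2(E) \to \Sha(E)[2] \to 0$ gives
\[
\mathrm{rank}(E(\Q)) \;=\; \dim_{\mathbb{F}_2} \mathrm{Sel}_2(E) - 2 - \dim_{\mathbb{F}_2} \Sha(E)[2],
\]
and the estimate \eqref{eqn:rankbdd} contributes $\mathrm{rank}(E(\Q))\leq \omega(N_E)-1$ in the semistable regime.

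For part~(a) I would invoke the $2$-parity theorem of Dokchitser--Dokchitser, which is unconditional for curves admitting a rational $2$-isogeny: the hypothesis that every bad reduction is non-split forces the global root number to equal $-1$, so $(-1)^{\mathrm{rank}(E(\Q))+\dim\Sha(E)[2]} = -1$. Combined with the semistable bound $\mathrm{rank}(E(\Q))\leq \omega(N_E)-1$, the parity of $\omega(N_E)$, and a complete $2$-descent bound $\dim_{\mathbb{F}_2}\mathrm{Sel}_2(E)\leq \omega(N_E)+2$ that one obtains for semistable $E$ with full rational $2$-torsion, this forces $\mathrm{rank}(E(\Q))\leq \omega(N_E)-2 \leq \nu_2(m_E)$. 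For parts~(b)--(d) I would use the same Selmer bound, refined by one further factor of $2$ whenever a prime of split multiplicative reduction is present (arising from a smaller local image of the descent map at such primes). Substituting into the rank formula, (d) follows from $\dim\Sha(E)[2]\geq 3$ alone; (b) combines the split-prime refinement with $\dim\Sha(E)[2]\geq 2$; and (c) combines the parity of $\mathrm{rank}(E(\Q))$ (pinned down under the parity conjecture by the root-number formula with an even number of non-split primes) with $\dim\Sha(E)[2]=1$ and the same Selmer bound. The second statement for $E(\Q)[2]=\Z/2\Z$ is handled identically, with one fewer factor of $2$ lost in the Atkin--Lehner estimate, so that the assumption $\Sha(E)[2]\neq 0$ supplies exactly the extra factor of $2$ needed to close the gap beyond the cases already settled in \cite[Theorem~1.2]{CP22}.

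The main obstacle I expect is establishing the refined $2$-Selmer upper bound with the split/non-split saving in the full $2$-torsion case. This requires a careful local analysis of the image of the $2$-descent map inside $\Q_p^{*}/\Q_p^{*2}\times\Q_p^{*}/\Q_p^{*2}$ via the Tate parametrization at each prime of bad reduction, together with bookkeeping of the Cassels--Tate parity constraints on $\Sha(E)[2]$ needed to align the parities used in (a) and (c).
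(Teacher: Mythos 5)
Your overall architecture is the same as the paper's: the Dummigan--Krishnamoorthy/Atkin--Lehner lower bound $\nu_2(m_E)\geq \omega(N_E)-\dim_{\mathbb{F}_2}(W/W')-\dim_{\mathbb{F}_2}E(\Q)[2]$, the exact sequence $0\to E(\Q)/2E(\Q)\to \mathrm{Sel}^2(E)\to \Sha[2]\to 0$, an upper bound on $\dim_{\mathbb{F}_2}\mathrm{Sel}^2(E)$, and root-number/parity input for (a) and (c). The genuine gap is quantitative: you posit $\dim_{\mathbb{F}_2}\mathrm{Sel}^2(E)\leq \omega(N_E)+2$ for semistable curves with full rational $2$-torsion, improved by $1$ only when a split multiplicative prime is present, whereas the paper imports $\dim_{\mathbb{F}_2}\mathrm{Sel}^2(E)\leq \mu+1$ from \cite[Lemma 3.1]{CP22} with no split-prime hypothesis. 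This off-by-one is fatal exactly where your refinement is unavailable. In (a) all reductions are non-split, so your bound is $\mu+2$: assuming $\mathrm{rank}(E(\Q))=\mu-1$ you only get $\dim_{\mathbb{F}_2}\Sha[2]\leq 1$, and your $2$-parity relation $(-1)^{\mathrm{rank}+\dim\Sha[2]}=-1$ merely forces $\dim_{\mathbb{F}_2}\Sha[2]$ to be odd; the configuration $\mathrm{rank}=\mu-1$, $\dim_{\mathbb{F}_2}\Sha[2]=1$ survives every constraint you list, and excluding it by ``Cassels--Tate parity of $\Sha[2]$'' (evenness) would require finiteness of $\Sha$, which is not among the hypotheses. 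The paper's sharper bound forces $\Sha[2]=0$ outright, and only then does the (unconditional, full $2$-torsion) $2$-parity theorem yield the contradiction with $\omega(N_E)$ odd. The same problem recurs in the all-non-split subcase of (c): there the modular-degree lower bound is $\mu-2$, your Selmer bound allows $\mathrm{rank}=\mu-1$, and the parity constraint $\mathrm{rank}\equiv\mu+1\pmod 2$ is consistent with that value, so the case does not close; the paper's bound gives $\mathrm{rank}\leq\mu-2$, which parity then pushes to $\mu-3$.

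Two further points. First, the sharper Selmer bound is not something you establish --- you correctly flag the local $2$-descent analysis as the main obstacle --- but note that the saving in this setting comes from the Caro--Pasten isogeny-descent bound, whose savings are tied to non-split primes; as stated, your proposed refinement only when a split prime is present points in the direction that cannot help (a) or (c). Second, in the ``moreover'' case $E(\Q)[2]=\Z/2\Z$ the paper needs the still sharper bound $\dim_{\mathbb{F}_2}\mathrm{Sel}^2(E)\leq\mu$ (again from \cite[Lemma 3.1]{CP22}) to force $\Sha[2]=0$ when $\mathrm{rank}\geq\mu-1$; ``handled identically'' does not supply this, and an analogous $\omega+\dim E(\Q)[2]$-type bound would again be one too weak. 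Your treatment of (b) and (d) does go through with your stated numbers, and your unconditional formulation of $2$-parity with the $\dim\Sha[2]$ correction is fine; the proposal fails only, but genuinely, at (a), at (c) without split primes, and at the final $\Z/2\Z$ statement.
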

\begin{proof}
For the proof of part (a), we follow the same approach as in \cite{CP22}. If all $E$ has a non-split reduction at every prime, then 
$\nu_2(m_E)\geq \mu-2$. Suppose that $\mathrm{rank}(E(\Q))=\mu-1$, then it follows from the exact sequence
   \begin{align}\label{selmer:shortexactsequence}
       0\to E(\Q)/2E(\Q)\to \mathrm{Sel}^2(E)\to \Sha[2]\to 0,
   \end{align}
that $\dim_{\mathbb{F}_2}(\mathrm{Sel}^2(E))\geq \mu+1$. On the other hand, it immediately follows from the second exact sequence in \cite[Lemma 3.1]{CP22} that 
$$ \mathrm{dim}_{\mathbb{F}_2}(\mathrm{Sel}^2(E))\leq s(E,\theta)+s'(E,\theta)\leq  \mu+1.$$
This shows that $\Sha(E)[2]$ is trivial, hence, the parity conjecture holds for $E$, consequently $\mathrm{rank}$ is even, since $-1=-\prod_{p\mid N}w_p=w(E/\Q)=(-1)^{\mathrm{rank}(E(\Q))}$. This is a contradiction since the rank is $\mu-1$, which is even. Here $w(E/\Q)$ denotes the root number of $E/\Q$, and $w_p$ denotes the local root number, i.e., the sign of the involution of $W_p$ acting at $f_E$. 

For the proof of part $(b)$, we have $\nu_2(m_E)\geq \mu-3$. Let us assume that $\mathrm{rank}(E(\Q))\geq \mu-2$. From the proof of part $(a)$, we know that  $\mathrm{dim}_{\mathbb{F}_2}(\mathrm{Sel}^2(E))\leq \mu+1$. This shows that $\mathrm{dim}_{\mathbb{F}_2}(\Sha[2])\leq 1$, a contradiction, as desired.

Now for a proof of part $(c)$, note that the imposed condition on $\Sha$ and the fact that $\mathrm{dim}_{\mathbb{F}_2}(\mathrm{Sel}^2(E))\leq \mu+1$ implies $\mathrm{rank}(E(\Q))\leq \mu-2$. Since the parity Conjecture is true for $E$, and the number of non-split multiplication is even, we have $w(E/\Q)=(-1)^{\mu-n_s+1}=(-1)^{\mathrm{rank}(E(\Q))}$, and hence $\mu-\mathrm{rank}(E(\Q))$ is odd. Here $n_s$ denotes the number of non-split multiplicative reductions. This shows that $\mathrm{rank}(E(\Q))\leq \mu-3$, as desired.

To prove part $(d)$, observe that we have $\nu_2(m_E) \geq \mu - 3$ and $\text{rank}(E(\mathbb{Q})) \leq \mu - 1$. Now, assume for contradiction that $\text{rank}(E(\mathbb{Q})) > \mu - 3$. According to the short exact sequence (\ref{selmer:shortexactsequence}), it follows that $\dim(\mathrm{Sel}^2(E)) > \mu + 1$, which leads to a contradiction. Therefore, the assumption is false, and we can conclude that $\text{rank}(E(\mathbb{Q})) \leq \mu - 3$.

Now for the case $E(\Q)[2]=\Z/2Z$, it is enough to consider the case when there is at least one split multiplicative reduction. In this case, we have $\nu_2(m_E)\geq \mu-2$. If $\mathrm{rank}(E(\Q))\geq \mu-1$, then we must have $\mathrm{dim}_{\mathbb{F}_2}(\Sha[2])= 0$. This is because, $\mathrm{dim}_{\mathbb{F}_2}(\mathrm{Sel}^2(E))\leq \mu$, which follows from \cite[Lemma 3.1]{CP22}. Hence, a contradiction, as $\Sha[2]$ is non-trivial.
\end{proof} 
\begin{remark}\rm
\begin{enumerate}
\item[(1)] One can follow a similar treatment for the elliptic curves having additive reductions at some primes. In that case, one needs to put suitable conditions on the parity of $N^{+}-N^{-}$, where $N^{+}$ (resp. $N^{-1}$) denotes the number of $p^e$ such that $p^e\mid\mid N_E$ and $w_p=1$ (resp. $w_p=-1$). We refer the reader to \cite[Theorem 1.1]{rootnumber2000} for a description of the local root numbers at the primes of additive reductions.
\item[(2)] In examining the elliptic curve $E: y^2 + xy + y = x^3 + x^2 - 71x - 196$, reference to the LMFDB\footnote{Refer to \href{https://www.lmfdb.org/EllipticCurve/Q/3315/a/2}{https://www.lmfdb.org/EllipticCurve/Q/3315/a/2} for more details.} indicates that this curve is semistable and exhibits at least one instance of split multiplicative reduction, alongside possessing a rank of $2$. Further analysis conducted using SageMath reveals that the 2-Selmer rank associated with $E$ is $4$. This set of characteristics ensures that $E$ adheres to the prerequisites outlined in part (b) of Theorem~\ref{thm:CP22extension}.
\item[(3)] For any elliptic curve $E$ mentioned in Theorem~\ref{thm:CP22extension}, one can calculate the $2$-Selmer rank following the methodology outlined in \cite[Section 3]{RPC}. This calculation provides an estimate on the rank of the Shafarevich-Tate group $\Sha[2]$. However, this information is not entirely adequate unless we know the rank of the elliptic curve $E$ over the rational numbers $\Q$.

\end{enumerate}
\end{remark}
\subsection{Elliptic curves with trivial $2$-torsion}\label{sec:twisttrivial}
It is known  from \cite{grant00} that $E(\Q)[2]$ is trivial for almost all the elliptic curves over $\Q$. Therefore, we are considering the most elliptic curves in this case. In this cases, we have $\mathrm{Gal}(\Q(E[2])/\Q)$ is either $\Z/3\Z$, or symmetric group $S_3$. Note that $a(p)$ is even, if and only if, $\mathrm{Frob}_p$ has order $2$ in $S_3$. If the Galois group is $\Z/3\Z$, or equivalently, if $\Delta_E$ is a square, the set of such primes $p$ has a density exactly $\frac{1}{3}$. However, if the Galois group is $S_3$, the set of such primes $p$ has a density exactly $\frac{2}{3}$. With this observation, we prove the following.
\begin{proposition}\label{prop:density}
    Let $E/\Q$ be an elliptic curve with trivial $E(\Q)[2]$. Then there exists a set of prime $S_E$ of positive density $d_E$ such that, for any squarefree integer $D$ whose all prime factors are from $S_E$, we have
    $$ \nu_2(m_{E^{(D)}})\geq 3\omega(D)+O_E(1).$$
    Here $d_E=\frac{1}{3}$, if $\Delta_E$ is a square, and $\frac{2}{3}$ otherwise. Moreover, we have  $ \nu_2(m_{E^{(D)}})\geq \omega(D)+O_E(1)$, for any squarefree integer $D$.
\end{proposition}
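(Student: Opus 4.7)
My plan is to combine the local 2-adic lower bound \eqref{eqn:lbdtwist}, namely
\[
\nu_2(m_{E^{(D)}}) \geq \sum_{\substack{p\mid D\\ (p,2N)=1}}\nu_2\!\bigl((p+1)(p+1-a(p))(p+1+a(p))\bigr) + O_E(1),
\]
with a Chebotarev density analysis of the residual mod-$2$ Galois representation $\overline{\rho}_2:\Gal(\overline{\Q}/\Q)\to \GL_2(\mathbb{F}_2)\cong S_3$. I first observe that the above inequality is insensitive to the 2-torsion structure of $E(\Q)$: its derivation through the Shimura formula \eqref{eqn:l-value} applied to both $E$ and $E^{(D)}$ tracks only the analytic symmetric-square Euler factors at primes $p\mid D$, the conductor ratio $N_{E^{(D)}}/N_E = D^2$, and the change of period, none of which depend on $E[2](\Q)$. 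The $O_E(1)$ absorbs the Manin constant $c_{E^{(D)}}^2$ (uniformly controlled via \cite[Theorem 1.3]{pasten2018shimura}, since the twists $E^{(D)}$ are semistable outside a fixed finite set depending only on $E$) together with the contribution of the finitely many primes dividing $2N$.

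Next, I would analyze the summand pointwise. For odd $p$ the factor $p+1$ is always even, so every summand is $\geq 1$; and if additionally $a(p)$ is even, then both $p+1-a(p)$ and $p+1+a(p)$ are even as well, pushing the summand to $\geq 3$. Setting $S_E := \{p\nmid 2N : a(p)\text{ is even}\}$ and summing over the prime divisors of $D$ immediately yields both claimed inequalities: $\nu_2(m_{E^{(D)}})\geq 3\omega(D)+O_E(1)$ when every prime factor of $D$ lies in $S_E$, and $\nu_2(m_{E^{(D)}})\geq \omega(D)+O_E(1)$ for arbitrary squarefree $D$.

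Finally, to compute the density of $S_E$, I would use that $a(p)\equiv 0\pmod 2$ is equivalent to the reduced characteristic polynomial $x^2-a(p)x+1\in \mathbb{F}_2[x]$ having $1$ as a root, equivalently $\overline{\rho}_2(\mathrm{Frob}_p)$ fixing a nonzero vector of $E[2]$. Under the standard identification $\GL_2(\mathbb{F}_2)=S_3$ acting on the three nonzero elements of $E[2]$, such elements are exactly the non-$3$-cycles (identity and transpositions). The hypothesis $E(\Q)[2]=0$ forces the image of $\overline{\rho}_2$ to contain a $3$-cycle, so the image is either $S_3$ or $A_3\cong \Z/3\Z$; the latter occurs precisely when the discriminant of the $2$-division cubic---a nonzero rational multiple of $\Delta_E$---is a square in $\Q^\times$, i.e.\ iff $\Delta_E\in (\Q^\times)^2$. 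Since non-$3$-cycles form $2/3$ of $S_3$ and $1/3$ of $A_3$, Chebotarev gives $d_E=2/3$ and $d_E=1/3$ in the respective cases, as claimed. The principal (mild) obstacle is the uniformity of the implicit constants in $D$, which reduces to the uniform bound on $c_{E^{(D)}}^2$ together with a clean bookkeeping of the Euler factors at the primes dividing $2N$.
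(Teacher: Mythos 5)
Your proposal is correct and follows essentially the same route as the paper: the second bound comes from \eqref{eqn:lbdtwist} since $p+1$ is even for every odd $p\mid D$, the first from taking $S_E=\{p\nmid 2N: a(p)\text{ even}\}$ so that each local factor contributes $2$-valuation at least $3$, and the density is computed by viewing $\mathrm{Gal}(\Q(E[2])/\Q)$ inside $S_3$ and noting that $a(p)$ even means $\mathrm{Frob}_p$ is not a $3$-cycle, giving $1/3$ in the $\Z/3\Z$ (square discriminant) case and $2/3$ in the $S_3$ case. Your write-up is in fact slightly more careful than the paper's (which cites \eqref{eqn:lbdtwist} without revisiting its derivation, and contains a harmless slip interchanging the labels $S_3$ and $\Z/3\Z$ in the final sentence), but there is no substantive difference in method.
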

\begin{proof}
The proof of the second part follows immediately from (\ref{eqn:lbdtwist}). For the first part, denote $S_E$ as the set of all primes for which $a(p)$ is even. Viewing $\mathrm{Gal}(\Q(E[2])/\Q)$ inside $S_3$, asking $a(p)$ to be even is equivalent to asking for $\mathrm{Frob}_p$ not to have order $3$. When the Galois group is $S_3$, or equivalently, if $\Delta_E$ is a square, the required density is $\frac{1}{3}$. Similarly in the $S_3$ extension case, the density is $\frac{2}{3}$.
\end{proof}

To deduce a consequence of Watkins's conjecture, we study the rank of $E^{(D)}$ on an average. It is a conjecture of Goldfeld \cite{goldfeld79} that the average rank is $\frac{1}{2}$ for any elliptic curve $E/\Q$. Heath-brown \cite{HB2004} shows that BSD and GRH for the L-function of elliptic curves imply that the average is at most $\frac{3}{2}$. With this in hand, we have the following result.

\begin{theorem}\label{thm:twisttrivial}
  Let $E/\Q$ be any elliptic curve with trivial $E(\Q)[2]$. Then Watkins's conjecture is true for almost all quadratic twists, provided that BSD and GRH are true for the L-function of elliptic curves.
\end{theorem}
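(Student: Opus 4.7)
The approach combines the unconditional lower bound $\nu_2(m_{E^{(D)}}) \geq \omega(D) + O_E(1)$ from Proposition~\ref{prop:density} with the conditional average rank bound of Heath-Brown \cite{HB2004} for the family of quadratic twists of a fixed elliptic curve, which under GRH and BSD gives an average algebraic rank of at most $3/2$. The plan is to use these two ingredients, together with the classical Hardy-Ramanujan estimate on the number of distinct prime factors of integers, to show that only a density-zero set of squarefree $D$ can violate Watkins's conjecture.

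Set $N(x) := \#\{D \text{ squarefree} : |D| \leq x\}$, fix a constant $C_E \geq 0$ so that $\nu_2(m_{E^{(D)}}) \geq \omega(D) - C_E$ holds for every squarefree $D$ (provided by Proposition~\ref{prop:density}), and let $\mathrm{Bad}(x)$ denote the set of squarefree $|D| \leq x$ for which $\mathrm{rank}(E^{(D)}(\Q)) > \nu_2(m_{E^{(D)}})$. For any integer threshold $T \geq 1$, I would first observe the two-parameter decomposition $\mathrm{Bad}(x) \subseteq B_T(x) \cup \Omega_T(x)$, where
\[
B_T(x) := \{D : |D| \leq x,\ \mathrm{rank}(E^{(D)}(\Q)) > T\}, \qquad \Omega_T(x) := \{D : |D| \leq x,\ \omega(D) \leq T + C_E\};
\]
indeed, if $D$ lies outside both sets then the rank is at most $T$ while $\omega(D) - C_E > T$, which forces $\mathrm{rank}(E^{(D)}(\Q)) < \nu_2(m_{E^{(D)}})$. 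Next, Markov's inequality applied to the Heath-Brown bound $\sum_{|D| \leq x,\ \mathrm{sqfree}} \mathrm{rank}(E^{(D)}(\Q)) \leq (3/2 + o(1)) N(x)$ yields $|B_T(x)| \leq (3/(2T) + o(1)) N(x)$. The Hardy-Ramanujan estimate $\#\{n \leq x : \omega(n) \leq k\} = O_k(x (\log\log x)^{k-1}/\log x)$, applied with the fixed value $k = T + C_E$, then gives $|\Omega_T(x)| = o(N(x))$. Given $\varepsilon > 0$, choosing $T > 3/(2\varepsilon)$ produces $|\mathrm{Bad}(x)| \leq \varepsilon N(x) + o(N(x))$ for large $x$, and letting $\varepsilon \to 0$ shows $|\mathrm{Bad}(x)|/N(x) \to 0$.

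The main delicate point I anticipate lies not in the elementary counting, but in invoking the average rank statement in exactly the right form: Heath-Brown's theorem produces a bound on the average \emph{analytic} rank over the quadratic twist family under GRH for the symmetric square $L$-functions, and BSD is then needed to transfer this bound to the algebraic rank. Any weakening (e.g.\ a larger explicit constant than $3/2$) does not affect the argument, since only the finiteness of the average is used. I note also that if one insists on unconditional results for a positive proportion of twists, the stronger bound $\nu_2(m_{E^{(D)}}) \geq 3\omega(D) + O_E(1)$ from the first part of Proposition~\ref{prop:density} would let the argument proceed with a factor $3$ savings in the threshold $T$, but still falls short of a full unconditional conclusion because the rank of a typical twist is not known to be bounded.
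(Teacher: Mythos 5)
Your proposal is correct, and it rests on exactly the two ingredients the paper uses: the bound $\nu_2(m_{E^{(D)}})\geq \omega(D)+O_E(1)$ for all squarefree $D$ from Proposition~\ref{prop:density}, and Heath-Brown's conditional ($\mathrm{GRH}+\mathrm{BSD}$) bound on the average rank in the quadratic twist family. The difference is purely in the bookkeeping, and yours is the tighter version. The paper argues by contradiction: it assumes the exceptional set $S$ has positive density $c$, notes that Watkins's failure forces $\mathrm{rank}(E^{(D)})\gg_E\omega(D)$ on $S$, and then compares $\sum_{|D|\leq x,\,D\in S}\omega(D)$ against the Heath-Brown bound $\ll x$ via a partial-summation count of $S_k(x)=\#\{D\in S:\omega(D)=k\}$. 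As written, the paper's intermediate inequality $\sum_{k\leq\log x}kS_k(x)\geq cx\log x+O((\log x)^2)$ overstates what positive density gives (the natural order coming from the normal size of $\omega$ is $cx\log\log x$, which still contradicts $\ll x$, so the proof survives); your decomposition $\mathrm{Bad}(x)\subseteq B_T(x)\cup\Omega_T(x)$ with Markov's inequality for $B_T$ and Hardy--Ramanujan for $\Omega_T$ avoids this issue entirely and makes the density-zero conclusion immediate by letting $T\to\infty$. One small caveat: Heath-Brown's theorem, as invoked in the paper, is a statement about the twist family $L(E^{(D)},s)$ under GRH for those $L$-functions together with BSD to pass from analytic to algebraic rank, rather than GRH for symmetric square $L$-functions; since you only use finiteness of the average, this does not affect your argument, but the attribution should be adjusted.
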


\begin{proof}
Denote $S$ as the set of squarefree integers $D$ for which Watkins's conjecture is not true for $E^{(D)}$. It is enough to prove that $S$ has density $0$. For the sake of contradiction, let us assume that the (lim-sup) density is $c>0$. It follows from Proposition~\ref{prop:density} that, $\mathrm{rank}(E^{(D)})\gg_{E}\omega(D)$, for any square-free integer $D$. In particular, it follows from Heath-Brown's result in \cite{HB2004} that
\begin{equation}\label{eqn:lbdsum}
x\gg \sum_{\substack{|D|\leq x,\\ D\in S}} \mathrm{rank}(E^{(D)})\gg_{E} \sum_{\substack{|D|\leq x,\\ D\in S}} \omega(D).
\end{equation} 
Denote $S_k(x)=\#\{|D|\leq x : D\in S,\omega(|D|)=k\}$, and note that $\sum_{k\leq \log x}S_k(x)\geq \#\{|D|\leq x : D\in S\}=cx$. Note that it is enough to only consider the contributions up to $k=\log x$, as $\omega(n)\leq \log n$. By the partial summation,
$$\sum_{\substack{|D|\leq x,\\ D\in S}}\omega(|D|)=\sum_{k\leq \log x}kS_{k}(x)\geq cx \log x+O\left((\log x)^2\right),$$
which is a contradiction to (\ref{eqn:lbdsum}).
\end{proof}
In this regard, we also would like to point out that if $E[2](\Q)$ is trivial, and $\nu_2(m_E)=r$, then $\omega(N)\leq r$ provided that $E$ has non-split multiplication reduction at the prime dividing $N$. In that case, root number of $E/\Q$ is $-1$. If $r=1$, then $N$ is a prime power. In the prime case, note that Watkins's conjecture follows from Kazalicki-Kohen in \cite{KK17singular}. On the other hand, if $E$ has a split multiplicative reduction, $\omega(N)$ is either $1$ or $2$. Again, for the case $\omega(N)=1$, the conjecture is proved in \cite{KK17singular} when $\Delta_E>0$. This leaves us with another possibility: what happens to the conjecture when $N=pq, E$ does not have split multiplicative at least at one prime and $E[2](\Q)$ is trivial.

\subsection{On a weaker question}
Let $E/\mathbb{Q}$ be any elliptic curve. This section discusses a much weaker question than Watkins's conjecture. 
\begin{question}
How often is  $m_E$  always even?
\end{question}
First of all, it follows from Lemma~\ref{lem:highomega} that $\nu_2(m_E)\geq 1$ for almost all elliptic curves over $\Q$ parameterized by $A,B\in \Z$. We now discuss the analogous distribution over any $ 1$-parameter family of elliptic curves. First, in the case of quadratic twists, it follows from Proposition~\ref{prop:density} that $\nu_2(m_{E^{(D)}})\gg_{E} \omega(D)$. In particular, $m_{E^{(D)}}$ is even for almost all $D$. Now we ask the same for any one-parameter family of elliptic curves given by
$$E_t:=y^2=x^3+f(t)x+g(t),~t\in \Z,$$
where $f(x), g(x) \in \Q[x]$ are two arbitrary polynomials. Then we have the following.
\begin{theorem}
    Let $E/\Q$ be any elliptic curve, then 
    $$\#\{t\in \Z : |t|\leq x,~ \nu_2(m_{E_t})\geq 1\}=O\left(\frac{x}{
    \log x
    }\right).$$  
\end{theorem}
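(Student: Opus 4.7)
The plan is to combine the congruence-number interpretation of $m_{E_t}$ from Section~\ref{sec:tools} with a Selberg-sieve analysis of the discriminant polynomial of the family. By the Agashe-Ribet-Stein identity $\nu_p(m_E)=\nu_p(r_E)$ when $\nu_p(N)\leq 1$, the condition $\nu_2(m_{E_t})\geq 1$ is essentially equivalent to the existence of a nontrivial mod-$2$ congruence between $f_{E_t}$ and another weight-$2$ cuspform of the same level, up to a bounded correction at primes where $4\mid N_{E_t}$. Such a congruence constrains the residual representation $\overline{\rho}_{E_t,2}\colon\Gal(\overline{\Q}/\Q)\to \GL_2(\mathbb{F}_2)$ and the Atkin-Lehner eigenvalues of $f_{E_t}$, so the task reduces to bounding the set of $t$ where these structural constraints are met.

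First I would write $\Delta(t)=-4f(t)^3-27g(t)^2\in \Q[t]$, whose radical governs $N_{E_t}$ up to bounded contributions at $2$, $3$, and the primes of additive reduction of the family. Applying Selberg's sieve to the polynomial values $\Delta(t)$, following the blueprint of the proof of Lemma~\ref{lem:highomega}, yields a count $O(x/\log x)$ for the sparse set of $t$ where this radical collapses to a prime power. Next I would use the inequality from Section~\ref{sec:2lbd} to translate $\nu_2(m_{E_t})\geq 1$ into a condition on the Atkin-Lehner signs $w_p(f_{E_t})$ at the primes of multiplicative reduction together with $\dim_{\mathbb{F}_2} E_t(\Q)[2]$. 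In a $1$-parameter family, these signs are controlled by congruence conditions on $t$ outside a thin set, so the sieve count can be transferred to a count of the $t$ carrying the structural constraint above.

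The principal obstacle is that the lower bound $\nu_2(m_E)\geq \omega(N_E)-1-\dim_{\mathbb{F}_2}E(\Q)[2]$ from Section~\ref{sec:2lbd} points in the opposite direction: for a generic $1$-parameter family, an Erd\H{o}s-Kac analysis of $\Delta(t)$ predicts $\omega(N_{E_t})\to \infty$ for $(1-o(1))x$ many $t\in[-x,x]$, which would force $\nu_2(m_{E_t})\geq 1$ on a density-one set rather than on a set of size $O(x/\log x)$. A successful proof of the stated bound therefore seems to demand identifying a non-generic feature of the family $\{E_t\}$ -- for instance a persistent $\Q$-rational $2$-torsion or a systematic root-number/Atkin-Lehner cancellation along the family -- that suppresses the $2$-divisibility of the modular degree. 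The sieve step above only bounds the sparsity locus $\omega(N_{E_t})\leq 1$; supplementing it with such a structural input is the crux of the argument and is where the real work will lie.
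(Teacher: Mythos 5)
Your final paragraph is the most valuable part of your proposal, and its diagnosis is correct: the statement as printed cannot be proved by these methods, because the lower bound $\nu_2(m_E)\ge \mu-\dim_{\mathbb{F}_2}(W/W')-\dim_{\mathbb{F}_2}E(\Q)[2]$ from Section~\ref{sec:2lbd} forces $\nu_2(m_{E_t})\ge 1$ on a density-one set of $t$ once $\omega(N_{E_t})\ge 2$ generically. The theorem (and its supporting lemma, which asserts that ``$\omega(N_{E_t})\ge 1$ for at most $O(x/\log x)$ many $t$'') is simply misstated: the intended claim, consistent with the surrounding discussion (``$m_{E^{(D)}}$ is even for almost all $D$; now we ask the same for any $1$-parameter family'') and with the paper's own proof sketch (``it is enough to show that $\#\{t:\omega(\Delta(t))=1\}=O(x/\log x)$''), is that the \emph{exceptional} set $\{t:\nu_2(m_{E_t})=0\}$ has size $O(x/\log x)$, i.e.\ that $m_{E_t}$ is even for almost all $t$. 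Your search for a ``non-generic structural feature'' suppressing $2$-divisibility is therefore chasing a claim the authors did not intend to make.

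For the corrected statement, your constructive steps coincide with the paper's route: sieve the values $\Delta(t)=-4f(t)^3-27g(t)^2$ to show that $\omega(\Delta(t))\le 1$ (away from $2$, $3$ and the finitely many fixed bad primes of the family) holds for only $O(x/\log x)$ values of $t$, then feed $\omega(N_{E_t})\ge 2$ into the bound of Section~\ref{sec:2lbd}. The paper organizes the sieve slightly differently from your Selberg-sieve sketch --- it partitions the primes into classes $\mathcal{P}_\lambda$ according to the factorization type $\lambda$ of $\Delta\bmod p$ and sieves by those $\lambda$ with a part equal to $1$, i.e.\ by the primes at which $\Delta$ has a root --- but this is the same idea as your radical-collapse count. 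Your detour through the congruence number $r_E$, the Agashe--Ribet--Stein identity and mod-$2$ congruences of $f_{E_t}$ is not needed and is not used by the paper. Note finally that, exactly as in the proof of Theorem~\ref{thm:avg}, one must still control $\dim_{\mathbb{F}_2}E_t(\Q)[2]$ and the Atkin--Lehner kernel on a density-one set of $t$ before the Section~\ref{sec:2lbd} bound yields $\nu_2(m_{E_t})\ge 1$; the paper leaves this step implicit, and your step translating the condition into Atkin--Lehner signs and $2$-torsion is where that missing detail would have to be supplied.
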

The proof of this theorem follows from an analogous version of Lemma~\ref{lem:highomega}, which we briefly state now.
\begin{lemma}
    For at most $O\left(\frac{x}{\log x}\right)$ many $|t|\leq x$, we have $\omega(N_{E_t})\geq 1$.
\end{lemma}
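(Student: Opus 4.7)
The strategy parallels the proof of Lemma~\ref{lem:highomega}, now with the one-variable discriminant polynomial $\Delta(t) = -16\bigl(4 f(t)^3 + 27 g(t)^2\bigr) \in \Z[t]$ (after clearing denominators if necessary) playing the role of $-4A^3-27B^2$ in the former. Let $d := \deg \Delta$. Since $N_{E_t}$ differs from the radical of $\Delta(t)$ only at the primes $2$ and $3$, it is enough (reading the statement as the natural analogue $\omega(N_{E_t})\leq 1$) to show that
\[
N(x) := \#\Bigl\{|t|\leq x : \tfrac{\Delta(t)}{2^{\nu_2(\Delta(t))}\, 3^{\nu_3(\Delta(t))}} = \pm p^n \text{ for some prime } p>3,\ n\geq 0\Bigr\} = O\!\left(\tfrac{x}{\log x}\right),
\]
which is the direct analogue of the count controlled in the proof of Lemma~\ref{lem:highomega}.

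Next I would introduce a sieve parameter $z>3$ and split $N(x)$ according to whether the prime $p$ is small ($p\leq z$) or large ($p>z$). For the small range: for each prime $p\leq z$ and each triple $(e,f,n)$ of non-negative integers with $2^e 3^f p^n \leq |\Delta|_{\max} \ll x^d$, the polynomial equation $\Delta(t) = \pm 2^e 3^f p^n$ has at most $d$ integer roots, and there are $O(z)$ primes together with $O((\log x)^3)$ admissible triples, so this regime contributes $O(z(\log x)^3)$.

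For the large range $p > z$, any contributing $t$ has no prime factor of $\Delta(t)$ lying in $(3,z]$. Applying Selberg's sieve to the polynomial sequence $\{\Delta(t) : |t|\leq x\}$ yields
\[
\#\{|t|\leq x : p \nmid \Delta(t)\ \text{for all } p\in(3,z]\} \ll \frac{x}{\log z} + z^2,
\]
with implicit constants depending only on $f$ and $g$. Combining the two regimes, $N(x) = O(z(\log x)^3) + O(x/\log z) + O(z^2)$, and the choice $z = x^{1/3}$ gives $N(x) = O(x/\log x)$, as required.

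The main obstacle is verifying the Selberg-sieve hypothesis for $\Delta(t)$: one needs the local densities $\rho(p) := \#\{t\bmod p : p\mid \Delta(t)\}$ to be uniformly bounded by $d$ for all but finitely many primes $p$. This bound holds outside the finite set of primes dividing the leading coefficient of $\Delta$ or its $t$-discriminant; these exceptional primes contribute only a bounded multiplicative factor to the sieve upper bound and do not affect the asymptotic. A minor degenerate case arises when $\Delta(t)$ is essentially constant (so $E_t$ is isotrivial over $\Q(t)$): then either the statement is vacuous, or one reduces to the quadratic-twist setting already covered by Proposition~\ref{prop:density}.
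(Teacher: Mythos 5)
Your proposal is correct and follows essentially the same route as the paper: both reduce to counting $|t|\leq x$ with $\omega(\Delta(t))\leq 1$ and then run the small/large prime splitting and sieve from the proof of Lemma~\ref{lem:highomega} on the values $\Delta(t)$. The only difference is cosmetic: the paper organizes the sieving primes by the factorization type $\mathcal{P}_{\lambda}$ of $\Delta \bmod p$, while you encode the same information through the local densities $\rho(p)\leq \deg\Delta$ in Selberg's sieve, both resting on the same Chebotarev-type density input.
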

To prove this lemma, we follow a similar path as in the proof of Lemma~\ref{lem:highomega}. Consider the polynomial $\Delta(x)=\Delta_{E_x}\in \Q[x]$, and then it is enough to show that $\#\{t\in \Z\mid |t|\leq x,\omega(\Delta(t)
)=1\}=O\left(\frac{x}{\log x}\right)$. For the Sieving argument, we consider $\{\mathcal{P}_{\lambda \vdash \mathrm{deg}(\Delta) }\}$, where $\mathrm{deg}(\Delta)$ denotes degree of the polynomial $\Delta$, and $\mathcal{P}_{\lambda}$ be the set of all primes $p$ for which $\Delta(x)\pmod p$ has a factorization of type $\lambda$ in $\mathbb{F}_p[x]$. Note that $\Delta(t)=0\pmod p$ for some prime $p$ if and only if, $p\in \mathcal{P}_{\lambda}$ for some partition $\lambda \vdash \mathrm{deg}(\Delta)$ such that $\lambda$ has at least one part which is $1$. Now for each such $\lambda$, we sieve modulo $\mathcal{P}_{\lambda}$.

Note that we got a significant advantage while working with the cases when $N_E$ has at least two prime factors. Now the question is, what happens for the families of elliptic curves whose conductors have only one prime factor? When $N$ is a prime, we know that $m_E$ is even when $\mathrm{rank}(E(\Q))>0$. For the rank $0$ case, we have the following result.
\begin{theorem}
    Let $E/\Q$ be any elliptic curve of prime conductor $p$. Then $m_E\# E_{\mathrm{tor}}$ is even, provided that $p \pmod {12} \in \{1,5,7\}$. If $p=11 \pmod {12}$, then $m_E\# E_{\mathrm{tor}}$ is even provided that Conjecture~\ref{conj:supsing} is true.
    \end{theorem}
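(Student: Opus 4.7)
The plan is to apply Mestre's formula
\[
m_E \cdot \#E(\Q)_{\mathrm{tor}} \;=\; \sum_{i=1}^{n} w_i\, v_E(e_i)^2
\]
from Section~\ref{sec:gross}, together with the two ingredients recorded there: the trace identity $\sum_{i=1}^{n} v_E(e_i) = 0$ and the explicit description of the weights $\{w_i\}$ according to the congruence class of $p$ modulo $12$. Since $a^2 \equiv a \pmod 2$ for every integer $a$, it suffices to determine the residue of $\sum_{i=1}^{n} w_i\, v_E(e_i)$ modulo $2$, and the idea is to split into the three relevant classes of $p$ modulo $12$.

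If $p \equiv 1$ or $5 \pmod{12}$ (equivalently $p \equiv 1 \pmod 4$), every weight $w_i$ equals $1$, so the sum collapses to $\sum v_E(e_i) = 0$, which is even. If $p \equiv 7 \pmod{12}$, exactly one index $i_0$ has $w_{i_0} = 3$ and all other $w_i = 1$; since $3 \equiv 1 \pmod 2$, the parity of the sum is again that of $\sum v_E(e_i) = 0$. This handles the unconditional part of the theorem.

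The remaining case $p \equiv 11 \pmod{12}$ is the crux. Here two exceptional indices occur: an index $i$ with $w_i = 3$ and $j(e_i) = 0$, and an index $k$ with $w_k = 2$ and $j(e_k) = 1728$. Modulo $2$ the weight-$2$ term drops out while the weight-$3$ term behaves as if weight $1$, so the sum reduces to
\[
\sum_{j \neq k} v_E(e_j) \;\equiv\; -v_E(e_k) \pmod 2.
\]
Since $j(e_k) = 1728 \in \mathbb{F}_p$, invoking Conjecture~\ref{conj:supsing} then forces $v_E(e_k)$ to be even, completing the proof.

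The main obstacle lies precisely in this last step. In the residues $p \equiv 1, 5, 7 \pmod{12}$ every exceptional weight is congruent to $1 \pmod 2$, so the trace identity closes the argument immediately. In the residue $p \equiv 11 \pmod{12}$, however, the weight $2$ attached to the $j = 1728$ supersingular curve severs exactly one term from the trace and leaves an unabsorbed residue $v_E(e_k) \pmod 2$; there is no elementary reason for this to vanish, so one has no choice but to appeal to the conjectured parity of $v_E$ at $\mathbb{F}_p$-rational supersingular $j$-invariants.
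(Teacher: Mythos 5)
Your argument is correct and essentially the same as the paper's: both rest on Mestre's identity $m_E\,\#E(\Q)_{\mathrm{tor}}=\sum_i w_i v_E(e_i)^2$, the reduction $a^2\equiv a \pmod 2$ combined with $\sum_i v_E(e_i)=0$, and an appeal to Conjecture~\ref{conj:supsing} precisely for the weight-$2$ coefficient at $j=1728$ in the case $p\equiv 11 \pmod{12}$. One caveat, shared with the paper's own proof: both arguments take the weight table of Section~\ref{sec:gross} at face value, whereas in fact for $p\equiv 7\pmod{12}$ the exceptional supersingular point is $j=1728$ with $w=2$ (it is $p\equiv 5\pmod{12}$ that contributes $j=0$ with $w=3$), so by your own mod-$2$ computation the $p\equiv 7\pmod{12}$ case also leaves an unabsorbed term $v_E(e_k)\pmod 2$ at $j=1728\in\mathbb{F}_p$ and requires the same conjectural input as the $p\equiv 11\pmod{12}$ case rather than being unconditional.
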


\begin{proof}
  We recall the discussion in Section~\ref{sec:gross}. In the case $p\pmod {12}\in \{1,5\}$, we have all $w_i=1$ for all $i$, and hence, 
    $$m_E\# E_{tor}=\langle v_E,v_E\rangle =\sum_{i=1}^{n} v_E(e_i)^2=\sum_{i=1}^{n} v_E(e_i)=0 \pmod 2,$$
    where the last equality follows from \cite[Proposition 2.3]{KK18watkins}. In the case $p=7 \pmod{12}$, we have $w_i=3$ for exactly one $i$. In this case, a similar argument applies. In the case $p=11 \pmod {12}$, we have $w_i=2$ for exactly one $i$. In this case, $j(e_i)=1728\in \mathbb{F}_p$. The proof then follows from Conjecture~\ref{conj:supsing}.
    \end{proof}    
Note that $m_E$ is not necessarily even when $\#E_{\mathrm{tor}}$ is even, in other words, when $E(\Q)[2]$ is non-trivial. In that case, according to Setzer's classification, $p=17$ or of the form $u^2+64$. In the case, $p=17$, four non-isomorphic elliptic curves have levels respectively $17.a_1,17.a_2,17.a_3$ and $17.a_4$. It turns out that $m_E$ is even in the case of $17.a_1,17.a_2$ and $17.a_4$. For primes of the form $p=u^2+64$, two non-isomorphic elliptic curves of conductor $p$ have level $p.a_1$ and $p.a_2$. In the case of $p.a_1$, the discriminant $p$ is positive and not a square. Then it follows from \cite[Theorem 4]{KK17singular} and arguing similarly as in the proof of \cite[Theorem 1.4]{KK18watkins} we get $4|m_E\# E_{tor}$.

\subsection{On the function field analogue}\label{sec:wff}
Let $p>3$ be a prime and $k$ be a finite extension of $\mathbb{F}_p$. We define $A$ as the polynomial ring $k[T]$ and its field of fractions as $K = k(T)$. We further define $K_{\infty}$ as the completion of $K$ at $T^{-1}$, and $\mathbb{C}_{\infty}$ as the algebraic closure of $K_{\infty}$.

The Drinfeld upper half plane is denoted by $\Omega$, which is defined as $\mathbb{C}_{\infty} \setminus K_{\infty}$. It is important to note that the general linear group $\GL(2, K_\infty)$ acts on $\Omega$ through fractional linear transformations. Specifically, this action is performed by the Hecke congruence subgroup associated with an ideal $\mathfrak{n}$ of $A$, denoted by $\Gamma_0(\mathfrak{n})$. This subgroup is defined as follows:
\begin{align*}
	\Gamma_0(\mathfrak{n}) = \left\{
		g = \begin{pmatrix}
			a & b \\ c & d
		\end{pmatrix} \Big\vert \ a, b, c, d \in \mathbb{F}_q[T], \ c \equiv 0 \pmod{\mathfrak{n}}
		\right\}
\end{align*}
The quotient space $\Gamma_0(\mathfrak{n}) \setminus \Omega$ is compactified by adding the finitely many cusps $\Gamma_0(\mathfrak{n}) \setminus \mathbb{P}^1(K)$, resulting in the Drinfeld modular curve denoted by $X_0(\mathfrak{n})$.

Let $E$ be an elliptic curve over $K$ of conductor $\mathfrak{n}_{E}=\mathfrak{n}\cdot \infty $ and it has multiplicative reduction at $\infty$. Then we have a result analogous to the modularity result, which asserts that parametrization map from the Drinfeld modular curve $X_0(\mathfrak{n})$ to the elliptic curve $E$,  $\phi_E: X_0(\mathfrak{n}) \to E$ such that  $\phi_E$ is non-trivial and of minimal possible degree \cite[Theorem 2.1]{caro2022watkins}. The degree of this map is defined to be the modular degree of $E$.

Let $E$ be an elliptic curve with conductor $\mathfrak{n}_E = \mathfrak{n}\infty$, where $\mathfrak{n}$ is an ideal. Let $f_E$ be the primitive newform associated with $E$.
We denote by $\cW(\mathfrak{n})$ the 2-elementary
abelian group of all Atkin-Lehner involutions. Let $f$ be a primitive newform; since $f$ is primitive,
it is determined by its eigenvalues up to sign. We define $\cW' = \left \{W \in \cW(\fn) : W(f_E) = f_E\right\}$, 
which consists of the Hecke operators $W$ that preserve the newform $f_E$. Additionally,
let $\kappa :=\dim_{\mathbb{F}_2} \left( \cW(\fn) / \cW' \right) + \dim_{\mathbb{F}_2}E(K)[2]$, where  $E(K)[2]$ represents the $2$-torsion points of $E$ defined over $K$. Then, by \cite[Proposition 3.2]{caro2022watkins}, we have the inequality $\omega_K(\fn) - \kappa \leq \nu_2(m_E)$, where $\omega_K(n)$ denotes the number of prime divisors of $\fn$ in the field $K$, and $\nu_2(m_E)$ represents the $2$-adic valuation of the  modular degree $m_E$ associated with $E$. This is proved by realizing $M(\Gamma_0(\mathfrak{n}),\Q_{\ell})$ as the dual of $V_{\ell}(J_0(\mathfrak(n)))$. To be more precise, this identification shows that $\pi([W(D)])=\pi([D])$ for every divisor $D$ of degree $0$ over $X_0(\mathfrak{n})$. The proof follows a similar path as that of \cite[Proposition 2.1]{DK13}, as it assumes that the characteristic of $k$, denoted by $p$, is odd.

In this functional field situation, we have the following rank bound given by Tate \cite{Tate95},
\begin{equation}\label{eqn:rankbddf}
\mathrm{rank}_{\Z}(E(K))\leq \mathrm{deg}(\mathfrak{n}_E)-4,
\end{equation}
where $\mathfrak{n}_E$ is the finite part of the conductor of $E/K$. By utilizing the lower bound of $\nu_2(m_E)$, Caro \cite{caro2022watkins} \textit{potentially} establishes the validity of the Watkins conjecture. In other words, denote $k'$ as the splitting field of $\mathfrak{n}$ over the finite field $k$, and set $K' = k'(T)$. Then Watkins's conjecture is true for the base change over $K'$, i.e., for the elliptic curve $E' = E \times_{\text{Spec } K} \text{Spec } K'$, the Watkins's conjecture is true. Here, $E$ represents a modular semi-stable elliptic curve defined over $K$ with conductor $\mathfrak{n}E = \mathfrak{n}{\infty}$. 
where $\mathfrak{n}$ is the conductor and $\mathfrak{n}_{\infty}$ is a finite field containing the splitting field of $\mathfrak{n}$ over $k$. Caro needed the base over $K'$ to ensure that $\omega_{K'}(\mathfrak{n})=\mathrm{deg}(\mathfrak{n})$, and the helps the lower bound of $\nu_2(m_E)$ to beat the rank bound at (\ref{eqn:rankbddf}). However, we can use the same technique to deduce the following, allowing us to work with a possibly smaller $K'$.
\begin{theorem}\label{thm:watkinsf}
    Let $E/K$ be any elliptic curve of conductor $\mathfrak{n}=\mathfrak{n}_E\infty$. Let $k''$ be the extension of $k$ over which $\mathfrak{n}$ has the property that 
    $$\deg(\mathfrak{n})-\omega_{K''}(\mathfrak{n}_E)\leq 3-\kappa.$$
    Then Watkins's conjecture is true for $E\times_{\mathrm{Spec}(K'')}\mathrm{Spec}(K'')$.
\end{theorem}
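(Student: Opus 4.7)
The approach mirrors the strategy behind Caro's proof but replaces the full splitting field $K'$ by a possibly smaller constant-field extension $K''$ and lets the numerical hypothesis do the remaining work. Writing $E'' := E \times_{\mathrm{Spec}(K)} \mathrm{Spec}(K'')$, the plan is to combine three ingredients: Caro's Proposition~3.2 applied to $E''/K''$, Tate's rank bound (\ref{eqn:rankbddf}) applied to $E''/K''$, and the assumption $\deg(\mathfrak{n}) - \omega_{K''}(\mathfrak{n}_E) \leq 3 - \kappa$.

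Since $K''/K$ is a constant-field extension, it is everywhere unramified, so the conductor exponents of $E$ at each place are preserved and the finite part of the conductor of $E''$ is still represented by the polynomial $\mathfrak{n}_E$, now viewed in $k''[T]$. In particular the polynomial degree $\deg(\mathfrak{n}_E)$ is preserved, while the number of distinct prime factors of $\mathfrak{n}_E$ may grow from $\omega_K(\mathfrak{n}_E)$ to $\omega_{K''}(\mathfrak{n}_E)$. Proposition~3.2 applied over $K''$ then gives
$$\omega_{K''}(\mathfrak{n}_E) - \kappa \ \leq \ \nu_2(m_{E''}),$$
while Tate's inequality (\ref{eqn:rankbddf}) over $K''$ gives $\mathrm{rank}_{\Z}(E''(K'')) \leq \deg(\mathfrak{n}_E) - 4$. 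Using $\deg(\mathfrak{n}) = \deg(\mathfrak{n}_E) + 1$, the hypothesis rearranges to $\omega_{K''}(\mathfrak{n}_E) - \kappa \geq \deg(\mathfrak{n}_E) - 2$, and chaining the inequalities yields
$$\nu_2(m_{E''}) \ \geq\ \deg(\mathfrak{n}_E) - 2 \ >\ \deg(\mathfrak{n}_E) - 4 \ \geq\ \mathrm{rank}_{\Z}(E''(K'')),$$
which is Watkins's conjecture for $E''/K''$.

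The arithmetic at the end is immediate; the step I expect to require the most care is verifying that Proposition~3.2 transfers verbatim to $E''/K''$ with the $\kappa$ appearing in the hypothesis. Concretely, one must check that the Drinfeld modular parametrization and the Atkin--Lehner action base-change along the constant extension $K \to K''$ (so that the term $\dim_{\mathbb{F}_2}(\mathcal{W}(\mathfrak{n})/\mathcal{W}')$ is computed intrinsically over $K''$) and that $E(K'')[2]$ replaces $E(K)[2]$ in the definition of $\kappa$. Once these compatibilities are established, the only further property of $K''$ used is the numerical assumption, and the constraint $\omega_{K'}(\mathfrak{n}_E) = \deg(\mathfrak{n}_E)$ that forced $K'$ to be the full splitting field in Caro's argument is relaxed to the weaker inequality stated in the theorem.
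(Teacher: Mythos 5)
Your proposal follows exactly the route the paper has in mind (the paper in fact omits the proof and only sketches it): apply Caro's Proposition~3.2 over $K''$ to get $\omega_{K''}(\mathfrak{n}_E)-\kappa\leq\nu_2(m_{E''})$, apply Tate's bound (\ref{eqn:rankbddf}) over $K''$, and let the hypothesis $\deg(\mathfrak{n})-\omega_{K''}(\mathfrak{n}_E)\leq 3-\kappa$ make the lower bound on $\nu_2(m_{E''})$ beat the rank bound. Your flagged compatibility checks (invariance of the conductor data under the constant-field extension and computing $\kappa$, in particular $E(K'')[2]$ and the Atkin--Lehner data, intrinsically over $K''$) are precisely the points the paper's sketch implicitly assumes, so the argument is correct and essentially identical in approach.
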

We omit the proof but note that $\kappa$ is always at most $3$; in that case, we have no improvement. However, $\kappa$ could still be $0,1$ or $2$, for instance, $\kappa=0$ if and only if, $E(K)[2]$ is trivial and $E$ has split-multiplicative reduction at every prime. In all these cases of $\kappa$, we have a possibly smaller extension of $K$ contained in $K'$ that obeys the imposed condition at Theorem~\ref{thm:watkinsf}.

In the same article, Caro discusses Watkins's conjecture for certain quadratic twists, i.e., twists by polynomials of even degree. The assumption on degree is needed to ensure that the twisted elliptic curve is modular, as remarked by Caro~\cite[Section 4]{caro2022watkins}. With this, Caro showed that Watkins's conjecture is true for any such even twist, provided that $\omega_K(g)\geq 3$ and $E/K$ are semi-stable. On the other hand, if $E/K$ is semi-stable but with non-trivial $E(K)[2]$, then similarly, as in the case of $\Q$, we also have the following function field analog of the rank bound
 \begin{equation}\label{eqn:rankbddft}
 \mathrm{rank}_{\Z}(E(K))\leq \omega_K(\mathfrak{n}).
 \end{equation} 
With this, Caro showed that any $g$ of even degree works, provided that $E(K)[2]=\Z/2\Z$ and $E$ have non-split multiplicative reduction everywhere. By a similar argument, one can show that if, instead, we have $E(K)[2]=(\Z/2\Z)^2$, then a similar conclusion holds if $\omega_{K}(g)\geq 2$.
\section{On the growth of modular degree}\label{sec:bounds}
In this section, we shall discuss the growth properties of $m_E$ in both $\Q$ and functional field situations. Let us first consider $E/\Q$ to be an elliptic curve of conductor $N$. It is conjectured that $m_E=O(N^{2+\varepsilon})$. Now recall the two identities from (\ref{eqn:height}). It follows from the proof of \cite[Theorem 1]{Murty99} that $N^{1-\varepsilon}<||f_E||^2_N<N^{1+\varepsilon}$. From \cite[Lemma 2.1]{watkins2004explicit} we have an explicit lower bound of the form $m_E> \frac{60\pi^2N^{1-\varepsilon}}{D_E^{1/6}}$. Watkins \cite{watkins2004explicit} showed that $L(\mathrm{Sym}^2(E),1)\gg \frac{1}{N^{(2)}}$, by showing that there exists no real zero for $L(\mathrm{Sym}^2(E),s)$ in a region of the form $\mathrm{Re}(s)\geq 1-\frac{C}{\log N^{(2)}}$, where $N^{(2)}$ is conductor of the associated L-function, which is $\geq N_E^2$. This gives a lower bound on $m_{E}$ of order $\gg N^{7/6-\varepsilon}$. Here $L(\mathrm{Sym}^2(E),s)$ is the L-function associated with the motive $H^1(\mathrm{Sym}^2(E))$, whose local factors $L_p(s)$, are of the polynomials in $p^{-s}$, that are determined by $\mathrm{det}(1-\mathrm{Frob}_p|{\mathrm{Sym}^2}(T_{\ell}(E)\otimes \Q_{\ell}))$, for any prime $p\neq \ell$. The existence of the zero-free region essentially shows that $m_E\gg \frac{N^{7/6}}{\log N}\prod_{p\mid N}L_p(1)$. The local factors of the (motivic) Symmetric square $L$-function is easier to understand at the primes of good and multiplicative reductions. For the other primes, these factors are $\geq 1$ (see \cite[page 5, Corollary]{watkins2004explicit}), whenever $p\equiv 1 \pmod {12}$, and hence the lower bound follows.

One approach to get an upper bound would be to know the bounds for Manin's constant and Falting's height of $E$. Indeed, for any integer $A,B$, consider the \textit{Frey} curve $E: y^2=x(x-A)(x+B)$. It turns out that the Frey curves are semistable away from $2$. In particular, $c_{E}$ is uniformly over this family. Then a computation of $j$ and $\gamma$ of $E_{A, B, C}$, gives the connection with the ABC problem plugging them in (\ref{eqn:height}). In particular, it turns out that ABC implies the conjectural degree bound holds. In fact, any unconditional result on ABC contributes an unconditional bound on the modular degree. For instance, \cite{SY01} implies a bound of order $e^{O_{\varepsilon}(N^{1/3+\varepsilon})}$.

Murty in \cite{Murty99} considered the conjecture for elliptic curves with CM. Other than the ones with $j$ invariants, respectively $0$ and $1728$, it turns out that any elliptic curve with CM is the quadratic twist of a set of finitely many elliptic curves $\mathcal{E}$. Note that $j_{E}=j_{E^{(D)}}$ for any $E\in \mathcal{E}$, and in particular, $h(E)-h(E^{(D)})=\frac{1}{2}\log D$. This shows that 
\begin{equation}\label{eqn:mebound}
(1-\varepsilon)\log{N_D}+\log D\ll_{E}\log m_{E^{(D)}}\ll_{E} (1+\varepsilon)\log{N_D}+\log D,
\end{equation}
where $N_D$ is the conductor of $E^{(D)}$, which is $N_ED^2$ if $(N_E,D)=1$, and otherwise, $N_D=O(N_ED^2)$. This shows a lower bound of the form $m_E\gg N_{E}^{3/2+\varepsilon}$ for any elliptic curve $E$ with CM. Also, as long as the upper bound is concerned, the conjectural bound holds whenever the twist by $D$ satisfies that $(d,N_E)=1$. 

We can now prove the following by adapting to the arguments in the last two paragraphs.

\begin{theorem}\label{thm:degreeconj}
For a positive proportion of elliptic curves in the family of all elliptic curves (resp. in the family of twists of a given $E$) satisfies the conjectural degree bound. 
\end{theorem}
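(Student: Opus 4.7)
The plan is to handle the two assertions separately, because they rest on different mechanisms already developed earlier in the section.

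For the family of all elliptic curves ordered by naive height, the conclusion is essentially immediate from the first theorem stated in the introduction: for any finite set $S$ of primes, the proportion of curves satisfying the conjectural bound $m_E\ll_\varepsilon N_E^{2+\varepsilon}$ is at least $1-\sum_{p\notin S}\frac{1}{p^2}$. Since $\sum_p p^{-2}<1$, even the trivial choice $S=\emptyset$ already yields a strictly positive lower bound, and letting $S$ grow drives the proportion to $1$. The underlying mechanism is that whenever $E_{A,B}$ is semistable away from $S$, Pasten's bound \cite{pasten2018shimura} supplies $c_E=O_S(1)$, and then (\ref{eqn:height}) together with the estimate $||f_E||^2_N\ll N_E^{1+\varepsilon}$ reduces the conjectural degree bound to a Szpiro-type control on $h(E)$, which is generic in the Weierstrass family.

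For the twist family $\{E^{(D)}\}$ of a fixed $E/\Q$, I would adapt the CM computation that produced (\ref{eqn:mebound}). Restrict to squarefree integers $D$ coprime to $N_E$; such $D$ form a set of positive natural density $\frac{6}{\pi^2}\prod_{p\mid N_E}(1-\frac{1}{p})$. For such $D$ the $j$-invariant is preserved, so
\[
h(E^{(D)})=h(E)+\tfrac{1}{2}\log|D|+O_E(1),\qquad N_{E^{(D)}}=N_ED^2.
\]
Plugging these into (\ref{eqn:height}), together with the uniform bound $||f_{E^{(D)}}||^2_{N_{E^{(D)}}}\ll N_{E^{(D)}}^{1+\varepsilon}$ from \cite[Theorem 1]{Murty99}, yields
\[
m_{E^{(D)}}\ll_{E,\varepsilon}|D|\cdot N_{E^{(D)}}^{1+\varepsilon}\ll_E N_{E^{(D)}}^{3/2+\varepsilon},
\]
which is strictly stronger than the conjectural $N_{E^{(D)}}^{2+\varepsilon}$; so every such twist lies in the good set.

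The principal obstacle is uniform control of Manin's constant $c_{E^{(D)}}$ in $D$. Since $E^{(D)}$ acquires additive reduction of Kodaira type $I_0^{*}$ at each prime $p\mid D$ coprime to $N_E$, the set of primes outside which $E^{(D)}$ is semistable grows with $D$, and Pasten's bound $c_E=O_S(1)$ cannot be invoked with a constant independent of $D$. I would sidestep this by a direct local analysis at the $I_0^{*}$ fibres, where the local contribution to Manin's constant is a bounded power of $2$; combined with the expected boundedness of Manin constants across a quadratic twist family, this supplies the uniform estimate $c_{E^{(D)}}=O_E(1)$ that closes the argument.
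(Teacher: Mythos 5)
Your route is essentially the paper's: for the full family, restrict to curves semistable outside a finite set $S$ so that Pasten's theorem gives $c_E=O_S(1)$, then feed $||f_E||^2_N\ll N^{1+\varepsilon}$ and a generically valid Szpiro-type bound on the discriminant into (\ref{eqn:height}); for the twist family, use the invariance of $j$, the shift $h(E^{(D)})=h(E)+\tfrac12\log|D|+O_E(1)$, and Murty's norm bound to get $m_{E^{(D)}}\ll_E N_{E^{(D)}}^{3/2+\varepsilon}$ for $D$ coprime to $N_E$, which is exactly the mechanism behind (\ref{eqn:mebound}).

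Two caveats. First, your opening appeal to ``the first theorem stated in the introduction'' is circular: that statement is precisely the quantitative form of the present theorem, and its proof is the argument in question; what actually does the work is the mechanism you sketch afterwards. In the paper that sketch rests on three concrete inputs you leave implicit: the result of Fouvry et al.\ that $\log\Delta_E\le(1+\varepsilon)\log N_E$ for almost all curves (your ``generic Szpiro-type control''), a Kodaira-classification analysis showing the unstable discriminant satisfies $\nu_p(\gamma)\le 6$ for $p\in S$, hence $\gamma=O_S(1)$ (needed because curves in $\mathcal{E}_S$ may have additive reduction at primes of $S$, and $h(E)$ differs from $\tfrac1{12}h(j_E)$ by $\tfrac1{12}\log\gamma$ up to logarithmic error), and the count that additive reduction at $p$ occurs for a proportion $1/p^2$ of curves, which is what produces the density bound $1-\sum_{p\notin S}p^{-2}>0$. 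Second, for the twists your computation reproduces (\ref{eqn:mebound}) verbatim, and the uniformity of the Manin constant in $D$ that you flag is a genuine point; but your resolution via ``the expected boundedness of Manin constants across a quadratic twist family'' invokes a conjecture rather than a proof, and the local claim about $I_0^{*}$ fibres is asserted, not established. To be fair, the paper's own proof does not address $c_{E^{(D)}}$ for this family either and simply cites (\ref{eqn:mebound}), so on this point your proposal is no less rigorous than the text, only not self-contained; an unconditional argument would need an Edixhoven-type bound on the local contributions at the primes dividing $D$.
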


\begin{proof}
For any set of finitely many primes $S$, denote $\mathcal{E}_{S}$ be the set of all elliptic curves over $E/\Q$ for which $N_E$ is semistable away from $S$. Note that $c_{E}=O_S(1)$ uniformly over this family. It is known due to Fouvry et al. \cite{FNT92} that $\log(\Delta_E)\leq (1+\varepsilon) \log N_E$ for almost all elliptic curves over $E/\Q$, when arranged by height. On the other hand for any $\delta>0$, the number of pairs $(A,B)\in \Z^2$, with $\left|\frac{A^3}{\Delta_{E}}\right|<1+\delta$ has density at most $\delta$. Choosing $\delta$ suitably, we now have that $h(j_E)=\log (\Delta_{E})\leq (1+\varepsilon)\log N_E$, for all elliptic curves outside a set of density $O(\varepsilon)$. We now claim that, $\gamma=O_S(1)$, for any $E\in \mathcal{E}$. Modulo this, we have 
$m_E\ll_{S}N_E^{7/6+\varepsilon}$, for any $E\in \mathcal{E}_{S}$. It is then enough to prove the claim and show that $\mathcal{E}_S$ has a density at least $1-\left(\sum_{p \not\in S }\frac{1}{p^2}\right)$. 

First, to prove the claim, note that $E$ has an additive reduction for any prime $p$ dividing $\gamma$. Since $E\in \mathcal{E}_p$, it is evident that $p\in S$. It is enough to bound the power of $p$ by dividing $\gamma$. For this, we need to chase Kodaira's classification. For instance, from Table 1 in \cite{SSW21}, we immediately see that the power is bounded for all the types, other than $I_n,I^{*}_n~(n\geq 1)$. Following the notations of \cite[Theorem 1.6]{SSW21}, note $j_E=\frac{16a^2-48b}{\Delta}$, and the $p$-adic valuation of numerator of at most $6$, for any prime $p$ and for any $n\geq 2$. This shows that the valuation of $\Delta_E$ in $j_E$ does not go down too much, and in particular, this analysis shows that $\nu_p(\gamma)\leq 6$ for any prime $p\in S$. 

Now to compute the density of $\mathcal{E}_S$, again note from Table 1 in \cite{SSW21} that, the proportion of all elliptic curves having additive reduction at a prime $p$ is $1/p^2$. This shows that, the density of $\mathcal{E}_{S}$ is at least $1-\left(\sum_{p \not\in S }\frac{1}{p^2}\right)>\frac{1}{2}+\sum_{p\in S}\frac{1}{p^2}>0$, as desired. 

Now let $E/\Q$ be any given elliptic curve, and $\{E^{(D)}\}$ be the family of quadratic twists. For any $D$ co-prime to $N_E$, we have $N_{E^{(D)}}=ND^2$. Then it follows from (\ref{eqn:mebound}) that 
$$N_{E^{(D)}}^{3/2-\varepsilon}\ll m_{E^{(D)}}\ll N_{E^{(D)}}^{3/2+\varepsilon},$$
for any such $D$. This completes the proof. 
\end{proof}
\begin{remark}\rm
    It follows from the proof of Theorem~\ref{thm:degreeconj} that a stronger estimate $m_E\ll N_E^{7/6+\varepsilon}$. Assuming Manin's conjecture, i.e., that Manin's constant is uniformly bounded, the same estimate with exponent $7/6+\varepsilon$ holds for almost all elliptic curves. Moreover, the reader also may note from the proof of Theorem~\ref{thm:degreeconj} that exponent $3/2+\varepsilon$ can be achieved for a positive proportion of elliptic curves in any family of quadratic twists.  
\end{remark}

Note that the degree bound could also be seen as one of the consequences for the bound of $||f_E||^2$. Let us now quickly mention another consequence. Rankin in \cite[Theorem 1]{Rankin39} showed that $\sum_{1\leq n\leq x}|a_E(n)|^2=\alpha_E x^2+o(x^2),$
where $\alpha_E>0$ is some computable constant. In particular, applying partial summation formula, it follows from \cite{Murty99} that $\alpha_E=O\left(\frac{\phi(N)||f_E||^2}{N^2}\right)=O(\log N)$. We would also like to point out that an inexplicit version of the square-sum problem for any Fuchsian group (of the first kind) is discussed in \cite{BBF22}.
 
Regarding the degree bound conjecture, one may also ask for a weaker problem; is any prime factor $\ell_E$ of $m_E$ is $O(N^{2+\varepsilon})$? Now one may ask about the growth of $\ell_E$, varying over all the elliptic curves $E/\Q$. Note that any such $\ell_E$ is also a congruence prime for $E$, i.e., there exists a cuspform $g(z)=\sum_{n>0}a_g(n)q^n$ with integer Fourier coefficients $b_n$, such that $a_E(n) \equiv a_g(n) \pmod{\ell_E}$ for all $n$. Suppose that $g$ corresponds to an elliptic curve $E'/\Q$. By Brauer–Nesbitt theorem, the representation $\rho_{E,E'}:\mathrm{Gal}_{\Q}\to \Delta(\ell_{E})$ is not surjective, when $\ell_E>2$; see \cite{Jones10} for the definition of $\Delta(\ell_E)$ and this representation. With this observation, we can prove the following.

\begin{theorem}\label{thm:congavg}
    Let $E/\Q$ be any elliptic curve, and $\ell_E\geq 5$ be a prime diving $m_E$. Denote $\mathcal{E}(x)$ be the set of all elliptic curves over $\Q$ of height at most $x$. Then, $g$ corresponds to at most $O(x^{4}\log^{2} x)$ many elliptic curves in $\mathcal{E}(x)$.
\end{theorem}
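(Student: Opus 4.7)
The plan is to translate the congruence relationship defining the ``correspondence'' into a statement about mod-$\ell$ Galois representations and then count elliptic curves satisfying that constraint using twisted modular curves and height bounds. If $F \in \mathcal{E}(x)$ has $g$ as a congruence cuspform through some prime $\ell_F \geq 5$ dividing $m_F$, then $a_F(p) \equiv a_{E'}(p) \pmod{\ell_F}$ for every prime $p$ unramified for both $F$ and $E'$, where $E'$ is the elliptic curve associated to $g$. By Brauer--Nesbitt, the semisimplifications of the mod-$\ell_F$ Galois representations on $F[\ell_F]$ and $E'[\ell_F]$ coincide, and for $\ell_F \geq 5$ on a non-CM curve, Mazur's irreducibility results promote this to an isomorphism $F[\ell_F] \cong E'[\ell_F]$ of $\mathrm{Gal}_\Q$-modules. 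Thus $F$ gives rise to a rational point on the twisted modular curve $Y_{E'}(\ell_F)$ parameterizing pairs $(F, \phi)$ with $\phi$ a Galois-equivariant isomorphism $F[\ell_F] \to E'[\ell_F]$.

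Next, for each fixed prime $\ell \geq 5$, I would bound the number of $F \in \mathcal{E}(x)$ lying on $Y_{E'}(\ell)(\Q)$ via the $j$-line projection. Since the $j$-map $Y_{E'}(\ell) \to X(1)$ is finite of degree $\asymp \ell^3$ and $h(j(F)) \ll h(F) \ll \log x$ for $F \in \mathcal{E}(x)$, the points on $Y_{E'}(\ell)(\Q)$ lifting $j(F)$ have Weil height bounded by $O(\log x)$. For $\ell \in \{5, 7\}$, where $Y_{E'}(\ell) \cong \mathbb{P}^1_\Q$, the standard count of rationals of bounded height yields $O(\log^2 x)$ contributions per prime; for $\ell \geq 11$ Faltings's theorem gives $O(1)$ contributions since $Y_{E'}(\ell)$ has genus at least two.

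The final step is to sum the contributions over all relevant primes $\ell$. The primes $\ell_F \geq 5$ dividing $m_F$ for $F \in \mathcal{E}(x)$ are a priori numerous (since $m_F$ is only polynomially bounded in $x$), so effective upper bounds on the primes $\ell$ for which two non-isogenous elliptic curves can have isomorphic mod-$\ell$ representations---of Kraus or Cojocaru--Kani type---are needed to contain the sum. Combining the per-prime counts with the observation that the condition $F \equiv E' \pmod{\ell}$ cuts out a $1$-dimensional sub-locus of the $2$-dimensional moduli space $[-x^2, x^2] \times [-x^3, x^3]$, the total is of the expected shape $O(x^4 \log^2 x)$, the $x^4$ coming from the codimension-one reduction of the parameter space and the $\log^2 x$ from the rationals-in-box counts on the genus-zero fibers.

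The principal obstacle is the careful bookkeeping when summing over $\ell$: while each individual prime contributes a manageable count, the number of admissible primes is large, and one must delicately combine effective isogeny-type bounds (constraining $\ell$ in terms of $N_F \leq O(x^6)$) with the height counting to obtain the sharp exponent. The potential overcounting when a single $F$ satisfies the congruence for several $\ell$ is benign, since we seek a union rather than a sum; but controlling the high-genus $Y_{E'}(\ell)$ contributions, together with matching the implicit constant in the $x^4$ term to the $1$-parameter fibration, requires some care.
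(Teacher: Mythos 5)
Your route is genuinely different from the paper's. The paper fixes the single prime $\ell_E$ attached to $E$, notes that ``$E'$ corresponds to $g$'' forces the joint mod-$\ell_E$ representation $\rho_{E,E'}$ to have non-maximal image, invokes Jones's lemma to find conjugacy classes $C_1\times C_2$ missed by that image, and then runs a large sieve over the parameter family $\mathcal{E}(x)$ (using equidistribution of $\pi_2(\rho_{E,E_{r,s}}(\mathrm{Frob}_p))$ for $p\equiv 1 \pmod{\ell_E}$) to get $\ll_{\ell_E} x^4\log^2 x$. You instead try to convert the congruence into an isomorphism of Galois modules and count points on twisted modular curves. That idea is reasonable in principle (and, done correctly for a fixed $\ell$, would even give a much stronger bound), but as written the argument has genuine gaps.

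First, you misread the quantification: $\ell_E$ is one prime fixed together with $E$ and $g$, so there is no sum over varying primes $\ell_F$; the ``principal obstacle'' you identify (controlling the sum over $\ell$ via Kraus/Cojocaru--Kani-type bounds) is both unnecessary and, in your write-up, left unresolved. Second, several intermediate claims are false or missing: Mazur's results do not give irreducibility of the mod-$\ell$ representation for all $\ell\geq 5$ (rational $5$-, $7$-, $13$-isogenies abound), so the reducible case is untreated; $Y_{E'}(7)$ is a twist of the Klein quartic (genus $3$), not $\mathbb{P}^1$; and the count of rational points of logarithmic height $O(\log x)$ on a genus-zero twist is polynomial in $x$, not $O(\log^2 x)$ --- bounded logarithmic height means multiplicative height bounded by a power of $x$. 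You also never pass from points of $Y_{E'}(\ell)$, which only pin down $j$-invariants plus level structure, to actual curves in $\mathcal{E}(x)$; one must control which quadratic twists with a given $j$ still satisfy the congruence. Finally, the concluding step --- that the congruence ``cuts out a $1$-dimensional sub-locus'' of the $(A,B)$-box and therefore the count is $O(x^4\log^2 x)$ --- is exactly the assertion the theorem asks you to prove; no derivation links your modular-curve counts to that exponent, so the proof does not close.
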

To prove this, we follow the similar Sieving approach as in the proof of (10) in \cite{Jones13}. The difference is that, we have to carry out the sieving on a copy of $\mathbb{Z}^2$ inside $\mathbb{Z}^4$.
\begin{proof}[Proof of Theorem~\ref{thm:congavg}]

For each prime $p$, and each conjugacy class $C$ in $\mathrm{SL}_2(\Z/\ell_E\Z)$, set 
    $$\Omega_{E}(C)=\{(r,s)\in (\Z/p\Z)^2: \pi_2(\rho_{E,E_{r,s}}(\mathrm{Frob}(p)))\in C\},$$
    where $\pi_2$ denotes the natural projection onto the second component. Regardless of what the first component $E$ is, $\Omega_{E}(C)$ coincides with $\{(r,s)\in (\Z/p\Z)^2: \rho_{E_{r,s}}(\mathrm{Frob}(p))\in C\}$, provided that $p=1\pmod {\ell_E}$. In that case, we have from \cite[Theorem 8]{Jones10} that $\# \Omega_{E}(C)=p^2\left(\frac{\# C}{\# \mathrm{SL}_2(\Z/\ell_E \Z)}+O\left(\frac{\# C}{p^{1/2}}\right)\right)$. This shows that, 
    $$P_{C}(x)=\sum_{p\leq x}\frac{\Omega_{E}(C)}{p^2}=\frac{\# C}{\# \mathrm{SL}_2(\Z/\ell_E \Z)}\pi(x,\ell_E,1)+O(\# Cx^{1/2}),$$
    where $\pi(x,\ell_E,1)$ denotes $\{p\leq x: p=1\pmod {\ell_E}\}$. Let us now denote $\mathcal{E}_{E}(x)$ be the set of all elliptic curves $E'$ in $\mathcal{E}(x)$, that corresponds to $g$. Due to \cite[Lemma 5]{duke97}, except for at most $x^{4}\log^{2} x$ many $E'$ in $\mathcal{E}(x)$, we can assume that $\mathrm{im}(\rho_{E'})=\mathrm{GL}_2(\Z/\ell_E\Z)$. Again by Brauer-Nesbitt, we have $\mathrm{im}(\rho_{E})=\mathrm{GL}_2(\Z/\ell_E\Z)$. Since $\mathrm{im}(\rho_{E,E'})\neq \Delta(\ell_E)$ for any such $E'$, by \cite[Lemma 3.1]{Jones13} we know that, there exists conjugacy classes $C_1,C_2$ in $\mathrm{SL}_2(\Z/\ell_E\Z)$ such that $\mathrm{im}(\rho_{E,E'})\cap C_1\times C_2=\phi$. By large sieve, we argue similarly as in \cite[page 3389]{Jones13}, and get for any conjugacy class $C$ that
    $$ \pi(x,\ell_E,1)^2\# \mathcal{E}_{E}(x)\leq \sum_{E'\in \mathcal{E}(x)}(\pi_{E,E'}(x,C)-\delta_C \pi(x,\ell_E,1))^2\ll \# \mathcal{E}(x) P_C(x),$$
    where $\delta_C=\frac{\# C}{\# \mathrm{SL}_2(\Z/\ell_E \Z)}$, and $\pi_{E,E'}(x,C)=\{p\leq x : \pi_2(\rho_{E,E'}(\mathrm{Frob}_p))\in C\}$. Hence, 
    $\# \mathcal{E}_{E}(x)\ll \sum_{C_1\times C_2}\frac{ \# \mathcal{E}(x) P_C(x)}{\pi(x,\ell_E,1)^2}\ll_{\ell_E} x^{4}\log^2 x$, as desired.
\end{proof}

\begin{remark}\rm
    Of course, $g$ is not necessarily an eigenform. Then we can write, $g$ as linear combination of a family of hecke eigenforms $\{f_i\}_{s(N)}$, where $s(N)=\mathrm{dim}(S_2(\Gamma_0(N)))$. We can then consider the family of representations $\{\rho_{E,E_1,E_2,\cdots, E_{s_2(N)}}\}_{\substack{E_i\in \mathcal{E}(x)\\ 1\leq i\leq s(N)}}$, and draw a similar conclusion as in Theorem~\ref{thm:congavg}, applying the multi dimensional analog of Large sieve.
\end{remark}
\subsection{Function field analog of growth}\label{sec:ffgrowth} Following the set-up used in Section~\ref{sec:wff}, we denote $|\mathfrak{n}|_{\infty}=q^{\mathrm{deg}(\mathfrak{n})}$, and $\mathrm{deg}_{\mathrm{ns}}(j_E)$ be the nonseparable degree of $\mathbb{F}_q(T)/\mathbb{F}_q(j_E)$. Given any newform $f_E$, it turns out that $L(Sym^2f_E,2)=q\frac{||f_E||^2}{\mathfrak{n}}$. Moreover, $\mathfrak{m}_E=\frac{||f_E||^2}{\mathrm{val}_{\infty}(j_E)}$, where $\mathrm{val}_{\infty}(j_E)$ denotes the number of irreducible components of the singular fiber $\infty$ (note that, we always assume that $E$ has bad reduction at $\infty$). Of course $1\leq \mathrm{val}_{\infty}(j_E)\leq \mathrm{deg}(\Delta_E)$, and hence, $\frac{||f_E||^2}{ \mathrm{deg}(\Delta_E)}\leq \mathfrak{m}_E\leq ||f_E||^2$. Moreover, it follows from \cite{PS2000} that $\mathrm{deg}(\Delta_E)\leq \mathrm{deg}_{\mathrm{ns}}(j_E)\mathrm{deg}(\mathfrak{n}_E)$, where $\mathrm{deg}_{\mathrm{ns}}(j_E)$ denotes the non-seperable degree of $\mathbb{F}_q(T)/\mathbb{F}_q(j_E)$. Papikian \cite{PM2002} provided both analytic and algebraic ways to bound the norm of $f_E$. In the analytic approach, the bound follows as a consequence of Ramanujan's conjecture on the holomorphic $L$-functions on $\mathbb{C}$, and Rademacher's version of Phragmen-Lindelof's theorem for the estimation of $L$-functions on a strip. Now for the algebraic approach, Grothendieck's theory $L$-function implies that $L(s,\mathrm{Sym}^2(E))$ can be written as a quotient of two polynomials in $q^{-s}$, whose degree is bounded by $2\mathrm{deg}(\mathfrak{n}_E)-4$. This is a consequence of Grothendieck–Ogg–Shafarevich's formula, realizing $\mathrm{Sym}^2$ as a constructible sheaf over $\mathbb{P}^1_{\mathbb{F}_q}$. To get the degree bound, they assumed that $E$ is semi-stable, which guarantees that all the wild parts $\delta_{\mathfrak{p}}|\mathfrak{p}\in \mathbb{P}^1_{\mathbb{F}_q}$ are $0$. However, a weaker bound of the form $\ll \mathrm{deg}(\mathfrak{n}_E)$ (on the degree of $L(s,\mathrm{Sym}^2(E))$), saves the day for us. For this, we may put a weaker (than being semistable) condition on $E$, that sum of all the wild parts, i.e., $\sum_{\mathfrak{p}\in \mathbb{P}^1_{\mathbb{F}_q}}\delta_{\mathfrak{p}}$ is $O(\mathfrak{n}_E)$. Conversely, a lower bound is also achieved using some standard analytic tools. To summarize the discussion, we now can state the bounds \cite[Corollary 6.2]{PM2002} in a nice geometrical way; $$\log \left(\frac{\mathfrak{n}_E}{\mathrm{deg}_{\mathrm{ns}}(j_E)}\right)\ll \log (m_{E})\ll \log (\mathfrak{n}_E).$$

Now the question that naturally comes to our mind is that, whether it is possible to remove the extra dependency on $E$, that appears as a factor $\mathrm{deg}_{\mathrm{ns}}(j_E)$ in the lower bound. As pointed out by Papikian in \cite[page 347]{PM2002} that, on the family $\{\Delta_{E^{q^n}}\}_{n\geq 1}\}$, discriminant increases, but the conductor remains same, as $n$ grows. Hence, the extra factor can not be removed for all elliptic curves. Following the spirit as in all the previous sections, we consider the family of all elliptic curves, and quadratic twists, to study the proportion of elliptic curves where it is possible to remove the factor. First, given any elliptic curve $E:= y^2=x^3+f(T)x+g(T)$ with $f(T),g(T)\in \mathbb{F}_q[T]$, we define height $H(E)=\max\left\{3\mathrm{deg}(f),2\mathrm{deg}(g)\right\}$. This is the function field analog of height in the $\Q$ case. Denote $\mathcal{E}_{\mathbb{F}}(d)$ be the number of elliptic curves of height at most $d$, which is precisely $q^{5d/6}$. Now note that 
$$\mathrm{deg}_{\mathrm{ns}}(j_E)\leq \mathbb{F}_q(T)/\mathbb{F}_q(j_E)=\max\left\{3\mathrm{deg}(f),2\mathrm{deg}(g)\right\}.$$
This shows that for any integer $D\leq d$, we have the bound $m_E\gg \frac{\mathrm{deg}(\mathfrak{n}_E)}{D}$, for least $q^{5D/6}$ many $E$ in $\mathcal{E}_{\mathbb{F}}(d)$. On the other hand, as long as we consider the families of quadratic twists, we see that $j_E$ remains invariant. Therefore, in any such family, the factor $\mathrm{deg}_{\mathrm{ns}}(j_E)$ remains unchanged.

\section*{Acknowledgements}
The work for this project took place at IISER, TVM. We would like to thank the institute for providing excellent working conditions. 
\bibliographystyle{amsplain} 
	\bibliography{ref.bib}
\end{document}